\newcommand{\lleft}{\left}
\newcommand{\rrvert}{\vert}
\newcommand{\rright}{\right}
\newcommand{\llvert}{\vert}
\newtheorem{them}{Theorem}[section]
\newtheorem{prop}{Proposition}[section]
\newtheorem{cor}{Corollary}[section]
\newtheorem{lem}{Lemma}[section]
\def\F{\mathcal{F}}
\def\K{\mathcal{K}}
\def\R{\mathbb{R}}
\def\P{\mathbb{P}}
\def\E{\mathbb{E}}
\newcommand{\Var}{\operatorname{Var}}
\newcommand{\eqref}[1]{(\ref{#1})}
\newcommand{\mathds}{\mathbh}
\renewcommand{\pi}{\uppi}
\renewcommand{\emptyset}{\varnothing}
\def\sfrac#1#2{#1/#2}
\def\vfrac#1#2{(#1)/#2}
\def\afrac#1#2{#1/(#2)}
\def\sklfrac#1#2{(#1/#2)}
\renewcommand{\widehat}{\hat}
\begin{document}
\begin{frontmatter}

\title{Non-asymptotic detection of two-component mixtures with unknown means}
\runtitle{Non-asymptotic detection of mixtures with unknown means}

\begin{aug}
\author[A]{\inits{B.}\fnms{B\'eatrice} \snm{Laurent}\thanksref{e1}\ead[label=e1,mark]{beatrice.laurent@insa-toulouse.fr}},
\author[A]{\inits{C.}\fnms{Cl\'ement}~\snm{Marteau}\thanksref{e2}\ead[label=e2,mark]{clement.marteau@insa-toulouse.fr}} \and\\ 
\author[A]{\inits{C.}\fnms{Cathy}~\snm{Maugis-Rabusseau}\corref{}\thanksref{e3}\ead[label=e3,mark]{cathy.maugis@insa-toulouse.fr}}
\address[A]{Institut de Math\'ematiques de Toulouse, INSA de Toulouse,
Universit\'e de Toulouse
INSA de Toulouse, 135 avenue de Rangueil, 31077 Toulouse Cedex 4, France.\\
\printead{e1}; \printead*{e2};\\ \printead*{e3}}
\end{aug}

\received{\smonth{4} \syear{2013}}
\revised{\smonth{2} \syear{2014}}

%
\begin{abstract}
This work is concerned with the detection of a mixture distribution
from a $\mathbb{R}$-valued sample.
Given a sample $X_1,\dots, X_n$ and an even density $\phi$, our aim
is to detect whether the sample distribution is $\phi(\cdot-\mu)$
for some unknown mean $\mu$, or is defined as a two-component mixture
based on translations of $\phi$.
We propose a procedure
which is based on several spacings of the order statistics, which
provides a level-$\alpha$ test for all $n$. Our test is therefore a multiple
testing procedure and we prove from a theoretical and practical
point of view that it automatically adapts to the proportion of the
mixture and to the difference of the means of the two components of the mixture
under the alternative. From a theoretical point of view, we prove the
optimality of the power of our procedure in various situations.
A simulation study shows the good performances of our test compared
with several classical procedures.
\end{abstract}

%
\begin{keyword}
\kwd{Higher Criticism}
\kwd{mixtures}
\kwd{non-asymptotic testing procedure}
\kwd{order statistics}
\kwd{separation rates}
\end{keyword}
\end{frontmatter}
%
\section{Introduction}
In this paper, the detection problem of a mixture distribution from a
$\mathbb{R}$-valued sample is considered. Let $(X_1,\dots,X_n)$ be i.i.d.
random variables from an unknown distribution $F$. All along the paper,
$F$ is assumed to admit a density $f$ w.r.t. the Lebesgue measure
on $\mathbb{R}$. The sample is said to be distributed from a mixture
when $f$ belongs to the set
%
\begin{equation}
\label{F1} \mathcal{F}_1 = \bigl\lbrace x\in\mathbb{R}\mapsto(1-
\varepsilon) \phi(x-\mu_1 ) + \varepsilon\phi(x - \mu_2);
\varepsilon\in\,]\,0,1[, (\mu_1,\mu_2)\in\R^2,
\mu_1<\mu_2 \bigr\rbrace,
\end{equation}
where $\phi(\cdot)$ denotes a density. In this paper, $\phi(\cdot)$ is
assumed to be an even known density, and when Gaussian mixtures are
considered, $\phi(\cdot)=\phi_G(\cdot)$ with
\[
\phi_G(x)=\frac{1}{\sqrt{2\pi}} \exp \biggl(-\frac{x^2}{2} \biggr),
\qquad \forall x\in\R. %
\]
For a complete introduction about mixtures, we refer to \cite
{McLachlan_Peel}. The two-component mixtures are often encountered in
practice, for instance, in biology and health science. They allow to
model situations where a population can be discriminated into two
different groups. The first subpopulation is then assumed to be
distributed following the density $\phi(\cdot-\mu_1)$ while the second
one follows the density $\phi(\cdot - \mu_2)$. The probability that an
observation $X_i$ arises from the first (resp. the second)
subpopulation is then modeled by $1-\varepsilon$ (resp. $\varepsilon$).

This model has been intensively studied and many paths have been
explored in order to provide a satisfying inference. In particular, the
detection problem has attracted a lot of attention in the last two
decades. The main goal is not to provide the best estimation of the
parameters of interest $(\varepsilon,\mu_1,\mu_2)$ but rather to decide
whether the incoming observations are following a mixture distribution
or not. In other words, one wants to detect if the sample of interest
comes from a homogeneous or heterogeneous population. Let $\mathcal
{F}_0$ be the density set defined as
%
\begin{equation}
\label{F0} \mathcal{F}_0 = \bigl\lbrace x\in\mathbb{R}\mapsto
\phi(x-\mu); \mu\in\mathbb{R} \bigr\rbrace.
\end{equation}
Formally, one wants to test
%
\begin{equation}
\mbox{``}f \in\mathcal{F}_0\mbox{''} \quad \mbox{against}\quad  \mbox{``}f\in\mathcal{F}_1\mbox{''}.
\label{eq:model_test}
\end{equation}

In various testing problems involving finite mixtures, the properties
of the likelihood ratio test have been widely investigated. We can
mention for instance \cite
{Azais_Gassiat_Mercadier,Chernoff_Lander,Dacunha_Gassiat,Garel} among
others. In all these papers, the main challenge is to determine the
asymptotic behaviour of the likelihood ratio under the alternative
hypothesis in order to investigate the power of the related test.
Alternative methods have also been considered: modified likelihood
ratio test \cite{Chen_Chen_Kalbfleisc}, estimation of the $L^2$
distance between the densities associated to the null and the
alternative hypotheses \cite{Charnigo_Sun}, EM approach \cite
{chen_Li} or tests based on the empirical characteristic function \cite
{klar_Meintanis}.

The main challenge related to the problem \eqref{eq:model_test} is to
find (optimal) conditions on $(\varepsilon,\mu_1,\mu_2)$ for which a
prescribed second kind error can be achieved. The first study in this
way is due to Ingster \cite{Ingster}, in the particular case where the
mean $\mu$ under the null hypothesis is known, the term $\mu_1$ in
the alternative is equal to $\mu$, and $\phi(\cdot)$ corresponds to a
Gaussian density. Similar results have also been obtained in \cite{Donoho_Jin}.
In this last paper, the so-called Higher Criticism has been
investigated. This algorithm is very powerful in the sense that it is
easy to implement, and provides similar power than the usual likelihood
ratio test. The asymptotic detection regions have been carefully
investigated in two different asymptotic regimes:
\begin{itemize}[$\bullet$]
\item[$\bullet$] the \textit{sparse regime} where
$\varepsilon\displaystyle \mathop{\sim}_{n\rightarrow+\infty} n^{-\delta}$ and
$\mu_2 - \mu_1 \displaystyle \mathop{\sim}_{n\rightarrow+\infty} \sqrt{2r
\log(n)}$ with $\frac{1}2
<\delta<1$ and $0<r<1$. In this case, it is proved that the two
hypotheses can be asymptotically separated if
\[
\lleft\{ %
\begin{array} {l@{\qquad}l} r> \delta- \frac{1} 2 & \mbox{when } \frac{1} 2 < \delta\leq\frac{3}4,
\\\noalign{\vspace*{2pt}}
r > (1 - \sqrt{1-\delta})^2 & \mbox{when } \frac{3} 4 <
\delta< 1;
\end{array} %
\rright.  %
\]
\item[$\bullet$] the \textit{dense regime} where $\varepsilon
\displaystyle \mathop{\sim}_{n\rightarrow+\infty} n^{-\delta}$ and $\mu_2 -
\mu_1 \displaystyle \mathop{\sim}_{n\rightarrow+\infty} n^{-r}$ with $0<\delta
\leq\frac{1} 2$ and $0<r<\frac{1} 2$. In this framework, the separation
is asymptotically possible if
$r< \frac{1} 2 -\delta$.
\end{itemize}
In the equations above, the notation $a_n \displaystyle \mathop{\sim}_{n\rightarrow
+\infty} b_n$ means that $\lim_{n\rightarrow+\infty} a_n/b_n =1$.
We refer for more details to \cite{Ingster} and \cite{Donoho_Jin}.
Jager and Wellner \cite{Jager_Wellner} proposed a family of tests
based on the Renyi divergences which generalizes the procedure based on
the Higher Criticism.
We also mention that generalizations of this procedure to
heteroscedastic mixtures have been proposed by Cai \textit{et al.} in \cite
{Cai_Jeng_Low} while
the problems of estimation and construction of confidence sets in
sparse mixture models are considered in \cite{Cai_Jin_Low}.
Addario-Berry \textit{et al.} \cite{Gabor} determine non-asymptotic separation
rates of testing for the contamination of a standard Gaussian vector in
$\R^n$ by non-zero mean components
when the alternatives have particular combinatorial and geometric
structures. More recently, Cai and Wu \cite{Cai_Wu} consider the
detection of sparse mixtures in the situation where the density of the
observations under the null hypothesis is fixed, but not necessarily Gaussian.

In this paper, we consider a testing problem where the null hypothesis
does not correspond to a fixed density but rather to the set of
densities $ \mathcal{F}_0 $ defined by
\eqref{F0} which corresponds to a translation model. Thus the mean
parameter $\mu$ under the null hypothesis is not assumed to be known.
The considered alternative $\mathcal{F}_1 $ corresponds to the set of
densities that are mixtures of two densities
of $ \mathcal{F}_0 $. Our aim is to decide whether the density $f$ of
the observations belongs to $\F_0$ or $\F_1$. To this end, we
introduce a new testing
procedure based on the order statistics. Contrary to the Higher
Criticism algorithm \cite{Donoho_Jin}, the main advantage of this
procedure is
that the mean $\mu$ under $H_0$ is not fixed. Since one can find
densities in $\F_1$ that are arbitrary close to $\F_0$, it is
impossible to build a
level-$\alpha$ test that achieves a prescribed power on the whole set
$\F_1$. Hence, we introduce subsets of $\F_1$ over which our
level-$\alpha$
test has a power greater than $1-\beta$. The construction of such
subsets more or less amounts to find conditions on
$(\varepsilon,\mu_1,\mu_2)$ which ensure that both hypotheses $H_0$
and $H_1$ are separable. To this end, we consider as in
\cite{Donoho_Jin} and \cite{Cai_Jeng_Low} two different regimes: the
\textit{dense} case where $|\mu_2-\mu_1|$ is assumed to be
bounded and $\varepsilon\geq C/\sqrt{n}$ for all $n\in\mathbb{N}^*$
and for some positive constant $C$, and the \textit{sparse} regime where
$\varepsilon$ is allowed to be much smaller than $1/\sqrt{n}$.

The paper is organized as follows. In Section~\ref{test:statordre}, a
testing procedure based on the order statistics 
is introduced. The Section~\ref{s:dense} is dedicated to the \textit
{dense} regime: we provide non-asymptotic lower and upper bounds for
our testing problem in the Gaussian case. Then, we investigate the
\textit{sparse} regime in Section~\ref{s:asymptotic} for both
Gaussian and Laplace distributions. Some numerical simulations,
providing a comparison with existing procedures are displayed
in Section~\ref{s:simu}. Proofs are gathered in Section~\ref
{s:proofs} and technical lemmas in the \hyperref[app]{Appendix}.

\section{The testing procedure}
\label{test:statordre}

\subsection{A test based on the order statistics}
Recall that given an i.i.d. sample $X_1,\dots, X_n$ having a common
density $f$ w.r.t. the Lebesgue measure on $\mathbb{R}$, our aim is to
consider the testing problem $H_0: f\in\F_0$ against $H_1:f\in\F
_1$, namely to decide whether $f$ corresponds to a given even density function
$\phi$ (up to a translation) or is defined as a two-components mixture
of translations of $\phi$.

In this context, one of the most popular testing procedures is the
Higher Criticism introduced in \cite{Donoho_Jin}, whose asymptotic
behaviour has been widely investigated (see also references above).
Nevertheless, there exists up to our knowledge no description of the
non-asymptotic performances of this algorithm. Moreover, this procedure
heavily depends on the knowledge of the mean under $H_0$. In this
paper, we work in a slightly different framework in the sense that a
translation model under $H_0$ is considered.

In this section, a new testing procedure based on spacings of the order
statistics is proposed. The order statistics are denoted by
$X_{(1)}\leq X_{(2)}\leq\cdots\leq X_{(n)}$. The main underlying idea
is that the spacing of these order statistics are free with respect to
the mean under $H_0$: for some $k<l\in\lbrace1,\dots, n \rbrace$,
the mean value affects the spatial position of a given $X_{(k)}$, but
not $X_{(l)} - X_{(k)}$.
Moreover, the distribution of the variables $X_{(l)} - X_{(k)}$ is
known under $H_0$ and has a different behavior under $H_1$, provided
$k$ and $l$ are well-chosen.

Let $\alpha\in\,]\,0,1[$ be a fixed level, $\P_f$ the distribution of
$X_1,\dots, X_n$ having common density $f$, and
$ \E_f$ the corresponding expectation.
In the following, a level-$\alpha$ test function $T_\alpha$ denotes a
measurable function of $(X_1,\ldots,X_n)$ with values in $\{0,1\}$,
such that the null hypothesis is rejected if $T_\alpha=1$ and
$\sup_{f\in\F_0} \P_f(T_\alpha=1)\leq\alpha$. Assume
that $n\geq2$ and consider the subset $\K_n$ of $\{1, 2, \ldots, n/2
\}$ defined by
\[
\K_n =\bigl\{2^j, 0\leq j\leq \bigl[{
\log_2(n/2)} \bigr] \bigr\}.
\]
Our test statistics is defined as
%
\begin{equation}
\label{def:testStatOrd} \Psi_{\alpha} := \sup_{k \in\K_n} \{{\mathds
{1}_{X_{(n-k+1)} - X_{(k)} > q_{\alpha_n,k}}} \},
\end{equation}
where, for all $u \in\,]\,0,1[$, $q_{u,k}$ is the $(1-u)$-quantile of
$X_{(n-k+1)} - X_{(k)}$ under the null hypothesis and
\[
\alpha_n=\sup \bigl\{{u \in\,]\,0,1[, \P_{H_0} ({\exists k \in
\K _n, X_{(n-k+1)}-X_{(k)} > q_{u,k}} ) \leq
\alpha} \bigr\}.
\]
Note that, by construction, $\alpha_n\leq\alpha$. Since the
distribution of
$X_{(n-k+1)} - X_{(k)}$ under the null hypothesis is independent of the
mean value $\mu$ of the $X_i$'s, $q_{\alpha_n,k}$ and $\alpha_n$ can
be approximated (via Monte-Carlo simulations for instance) under the
assumption that the $X_i$'s have common density $\phi$. Below (see in
particular Section~\ref{s:general_result}), we also provide explicit
upper bounds for the quantiles, which can be used instead of the true
$q_{\alpha,k}$ if necessary.

\subsection{First and second kind errors}

By definition, the test statistics $\Psi_{\alpha}$ introduced in
\eqref{def:testStatOrd} is exactly of level $\alpha$, namely
\[
\P_{H_0}(\Psi_\alpha= 1) = \P_{H_0} ({\exists k \in
\K_n, X_{(n-k+1)}-X_{(k)} > q_{\alpha_n,k}} ) \leq
\alpha,
\]
thanks to the definition of $\alpha_n$. We point out that $\alpha_n
\geq\alpha/ |\mathcal{K}_n |$, where $|\mathcal{K}_n |$ denotes the
cardinality of $\mathcal{K}_n $. Indeed,
\begin{eqnarray*}
\P_{H_0} ({\exists k \in\K_n, X_{(n-k+1)}-X_{(k)}
> q_{\alpha/
|\mathcal{K}_n |,k}} ) & \leq& \sum_{k\in\K_n}
\P_{H_0}(X_{(n-k+1)}-X_{(k)} > q_{\alpha/
|\mathcal{K}_n |,k})
\\
& \leq& \sum_{k\in\K_n} \frac{\alpha}{|\K_n|} \leq\alpha.
\end{eqnarray*}
In practice, the choice of $\alpha_n$, instead of the so-called
Bonferroni correction $ \alpha/ |\mathcal{K}_n |$, allows a numerical
improvement of the performances of $\Psi_\alpha$. We refer to \cite
{FL_2006} for an extended discussion on this subject.

Now, we turn our attention to the control of the second kind error. We
emphasize that the test $\Psi_\alpha$ is a multiple testing
procedure: we combine $|\K_n|$ different tests, which correspond to
different spacing
for the order statistics. We can remark that, for any $f\in\mathcal{F}_1$
\begin{eqnarray*}
\mathbb{P}_{f}(\Psi_\alpha=0) & = & \mathbb{P}_{f}
\Bigl( \sup_{k \in\mathcal{K}_n} \lbrace \mathbh{1}_{X_{(n-k+1)} - X_{(k)} > q_{\alpha_n,k}} \rbrace=0
\Bigr)
\\
& = & \mathbb{P}_{f} \biggl( \bigcap_{k \in\mathcal{K}_n}
\lbrace \mathbh{1}_{X_{(n-k+1)} - X_{(k)} > q_{\alpha_n,k}} =0 \rbrace \biggr)
\\
& \leq& \inf_{k \in\mathcal{K}_n} \mathbb{P}_{f} ( \mathbh
{1}_{X_{(n-k+1)} - X_{(k)} > q_{\alpha_n,k}} =0 ).
\end{eqnarray*}
Hence, the second kind error of $\Psi_\alpha$ is close to the
smallest one in the collection $\K_n$. In some sense, the ``optimal''
choice of $k\in\K_n$
is data-driven. The only price to pay for adaptation relies in the
``level'' $\alpha_n$, which is smaller than $\alpha$.

From now on, our aim is to evaluate precisely the power of the test for
different kinds of alternatives:
dense mixtures (Section~\ref{s:dense}) or sparse mixtures
(Section~\ref{s:asymptotic}). A general non-asymptotic result is
provided in
Section~\ref{s:general_result}.

\section{Dense mixtures}
\label{s:dense}

In this section, we assume that the difference between the means $\mu
_1$ and $\mu_2$ of the two components
of the mixture is bounded. We will see that the settings of interest
correspond to the case where $\varepsilon\geq C/\sqrt{n}$ for some
constant $C>0$.
In the literature, this regime is called the dense case.

We consider the set of alternatives
\[
\F_1[M] = \bigl\{{f(\cdot)=(1-\varepsilon)\phi(\cdot-\mu_1) +
\varepsilon \phi(\cdot-\mu_2), \varepsilon\in\,]\,0,1[, 0 < \mu_2-
\mu_1 \leq M } \bigr\} %
\]
with $M>0$.
When the density of the standard normal distribution is considered
($\phi=\phi_G$), this set is denoted $\F_{1,G}[M]$.

The aim of this section is to provide explicit conditions on the
triplet $(\varepsilon,\mu_1,\mu_2)$ that guarantee
a prescribed power for a test of mixture detection, provided that $f
\in \F_1[M]$.
More precisely, we measure the distance to the null hypothesis by the
quantity $ d(\varepsilon, \mu_1,\mu_2)= \varepsilon(1-\varepsilon)
(\mu_2-\mu_1)^2 $ and we assume that
$d(\varepsilon, \mu_1,\mu_2) \geq\rho$ for some $\rho>0$. The
question can be
therefore formulated as follows: what is the minimal value of $\rho$
to be able to detect the mixture? Under this condition, is
the test proposed in Section~\ref{test:statordre} powerful? We address
these two questions for Gaussian mixture
models. We also provide a simple test based on the estimation of the
variance which is powerful (not only for Gaussian mixtures) in the
framework considered in this section.

\subsection{Lower bound for the detection of a Gaussian mixture model}
In this section, we consider the same definition of non-asymptotic
lower bounds for hypotheses testing problems
than the ones introduced in \cite{Yannick}
for signal detection in a Gaussian regression model or a Gaussian
sequence model. Let us recall these definitions.
Given $\beta\in\,]\,0,1[$, the class
of alternatives $\F_1[M]$, and a level-$\alpha$ test $T_{\alpha}$
with values in $\{0,1\}$ (rejecting $H_0$
when $T_{\alpha}=1$), we define the uniform separation rate $\rho
(T_{\alpha},\F_1[M],\beta)$ of $T_{\alpha}$ over
the class $\F_1[M] $ as the smallest positive number $\rho$ such that
the test has a second kind error
at most equal to $\beta$ for all alternatives $f$ in $\F_1[M]$ such
that $ d(\varepsilon, \mu_1,\mu_2)=
\varepsilon(1-\varepsilon) (\mu_2-\mu_1)^2 \geq\rho$.
More
precisely,
%
\begin{equation}
\label{defvitesse} \rho\bigl(T_\alpha,\F_1[M],\beta\bigr)=\inf{}
\Bigl\{\rho>0, \sup_{f\in\F
_1[M], d(\varepsilon, \mu_1,\mu_2) \geq
\rho}\P_f(T_{\alpha}=0)
\leq\beta \Bigr\}.
\end{equation}
Then, we introduce the $(\alpha,\beta)$-minimax separation rate over
$\F_1[M]$ defined as
%
\begin{eqnarray}
\label{mrt} \underline{\rho}\bigl(\F_1[M] ,\alpha,\beta\bigr)=
\inf_{T_{\alpha}}\rho \bigl(T_{\alpha},\F_1[M],\beta
\bigr),
\end{eqnarray}
where the infimum is taken over all level-$\alpha$ tests $ T_{\alpha}$.

We provide in the next theorem a non-asymptotic lower bound for
$\underline{\rho}(\F_1[M] ,\alpha,\beta)$ in the case where $\phi$
corresponds to the standard Gaussian density.

%
\begin{them}
\label{Th:lowerbound:dense}
Let $\alpha\in\,]\,0,1[$ and $\beta\in\,]\,0,1-\alpha[$. Let
\[
\rho^\star=\frac{1}{C(M)} \biggl({\sqrt{\frac{-2\log[c(\alpha
,\beta)]}{n}} \sqrt{1 +
\frac{\log[c(\alpha,\beta)]}{2n}}} \biggr),
\]
with $c(\alpha,\beta)=1-\frac{(1-\alpha-\beta)^2}{2}$ and $C(M) =
\sqrt{\frac{1}{2} + \frac{2M^2}{3} \mathrm{e}^{M^2/4}}$. Then for all
$\rho< \rho^\star$,
\[
\inf_{T_\alpha} \sup_{f\in\F_{1,G}[M], d(
\varepsilon, \mu_1,\mu_2) \geq\rho}
\P_f(T_\alpha=0) > \beta,
\]
where the infimum is taken over all level-$\alpha$ test $T_\alpha$.
This implies that
\[
\underline{\rho}\bigl(\F_{1,G}[M] ,\alpha,\beta\bigr) \geq
\rho^\star.
\]
\end{them}

Theorem~\ref{Th:lowerbound:dense} implies that whatever the
level-$\alpha$ test $T_\alpha$, if
$\rho<\rho^\star$, there exists a density $f\in\F_{1,G}[M]$ for
which $\P_f(T_\alpha=0) > \beta$.
In particular, testing is not possible if $\mu_2 -\mu_1$ is too small
with respect to $\varepsilon(1-\varepsilon)$. We will show in Section~\ref
{SecVar} that this condition on $(\varepsilon,\mu_1,\mu_2)$ is optimal
(up to constant).

\subsection{Upper bound for the testing procedure \texorpdfstring{$\Psi_\alpha$}{$Psi_alpha$} in
the Gaussian case}\label{s:main}
The goal of this section is to give explicit conditions on
$(\varepsilon,\mu_1,\mu_2)$ that ensure a prescribed power for
the test $\Psi_{\alpha}$ defined in (\ref{def:testStatOrd}), when
$\phi$ is the standard Gaussian density.

%
\begin{them}
\label{Th:majo}
Let $X_1,\ldots,X_n$ be i.i.d. real random variables with common
density $f$. Let $\alpha\in\,]\,0,1[$ and consider the level-$\alpha$
test $\Psi_{\alpha}$
defined by \eqref{def:testStatOrd}. Let $\beta\in\,]\,0,1-\alpha[$ and
$M>0$. Assume that $n$ fulfills $ n \geq3$ and
$8.25\times{\log(4\log_2(n/2)/\alpha)}/{n} \leq\int_M^{\infty}
\phi_G(x) \,\mathrm{d}x $.

Then, there exists a positive constant $ C(\alpha,\beta,M) $
depending only on $\alpha$, $\beta$ and M, such that if
%
\begin{equation}
\label{Conddense} \rho\geq C(\alpha,\beta,M) \sqrt{\frac{\log\log(n)}{n}},
\end{equation}
then,
\[
\sup_{f\in\F_{1,G}[M], d(\varepsilon, \mu_1,\mu_2) \geq\rho} \P _f(\Psi_{\alpha}=0)\leq\beta.
\]
\end{them}

\begin{Comments*}
The technical condition on $n$ to get the result of Theorem~\ref
{Th:majo} is satisfied for $n \geq107$ when $M=1/10$ and $\alpha=0.05$.

Note that the value of $\rho$ proposed in \eqref{Conddense} differs
from the lower bound $\rho^\star$ by a term of order
$\sqrt{\log\log n}$. This log log term is due to the multiple
(adaptive) testing procedure: the optimal value for $k\in\mathcal{K}_n$
in the test $\Psi_\alpha$ is chosen from the data. Hence, this $
\sqrt{\log\log(n)} $ term corresponds to the price to pay in such a setting.
This kind of logarithmic loss is quite classical in test theory: see
for instance \cite{Spok} or \cite{FL_2006} in slightly different
settings.

Instead of considering the test statistics $\Psi_{\alpha}$ defined by
\eqref{def:testStatOrd}, we could introduce the statistics
\[
\mathds{1}_{X_{(n-k^*+1)} - X_{(k^*)} > q_{\alpha,k^*}},
\]
where $k^*$ has to be suitably chosen and depends on $M$. By this way,
we would avoid the logarithmic loss in the minimax separation rate
over the set $ \F_{1,G}[M]$ and obtain a rate that coincides (up to
constants) with the lower bound given in Theorem~\ref{Th:lowerbound:dense} (see the proof of Theorem~\ref{Th:majo}).
In practice, using the test statistics $\Psi_{\alpha}$ is more
satisfactory since it does
not depend on $M$.
\end{Comments*}

\subsection{A testing procedure based on the variance}\label{SecVar}
In this section, we do not assume that the $X_i$'s are Gaussian random
variables.
We are interested in a simple test based on the variance of the
$X_i$'s. We will prove that this test allows us
to achieve the lower bound obtained in Theorem~\ref
{Th:lowerbound:dense}. 

Remark that under $H_0$, $\Var(X_i) = \sigma^2$, where
$\sigma^2= \int_{\R} x^2 \phi(x) \,\mathrm{d}x$, while under $H_1$, $\Var(X_i) = \sigma^2
+ \varepsilon(1-\varepsilon) (\mu_2-\mu_1)^2$. Hence, we consider
the test $\psi_\alpha$ defined by
%
\begin{eqnarray}\label{eq:test_variance}
\psi_\alpha= \mathbh{1}_{\lbrace S_n^2 > v_{\alpha,n} \rbrace},\qquad  \mbox{where }
S_n^2 = \frac{1}{n-1} \sum
_{i=1}^n (X_i - \bar
X_n)^2,
\end{eqnarray}
and $v_{\alpha,n}$ denotes the $(1-\alpha)$-quantile of the variable
$S_n^2$ under $H_0$. Then the following proposition holds.

%
\begin{prop}
\label{Prop:upperbound:dense}
Let $\alpha\in\,]\,0,1[$ and $\beta\in\,]\,0,1-\alpha[$. Assume that the
density function $\phi$ has a finite fourth moment: $\int_{\R} x^4
\phi(x) \,\mathrm{d}x
\leq B$. There exists a positive constant $C(\alpha,\beta,M,B) $
depending on $( \alpha,\beta,M,B)$ such that if
%
\begin{equation}\label{rho:testvariance}
\rho\geq C(\alpha,\beta,M,B)/\sqrt{n},
\end{equation}
then
\[
\sup_{f\in\F_{1}[M], d(\varepsilon, \mu_1,\mu_2) \geq\rho} \P _f(\psi_{\alpha}=0)\leq\beta.
\]
\end{prop}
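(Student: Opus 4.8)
The plan is to control the second kind error of $\psi_\alpha$ via Chebyshev-type inequalities applied to the empirical variance $S_n^2$, exploiting the gap between $\mathrm{Var}(X_i)$ under $H_0$ (which equals $\sigma^2$) and under $H_1$ (which equals $\sigma^2 + d(\varepsilon,\mu_1,\mu_2) \geq \sigma^2 + \rho$). First I would obtain an explicit upper bound on the critical value $v_{\alpha,n}$: since $\E_{H_0}[S_n^2] = \sigma^2$ and the variance of $S_n^2$ under $H_0$ is of order $C(B)/n$ (using the finite fourth moment assumption $\int x^4\phi(x)\,dx \leq B$, together with the fact that $\phi$ is even so $\sigma^2 \leq \sqrt{B}$), Chebyshev's inequality gives $v_{\alpha,n} \leq \sigma^2 + \sqrt{C(B)/(\alpha n)}$. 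Then, for $f \in \F_1[M]$ with $d(\varepsilon,\mu_1,\mu_2)\geq\rho$, I would write
$$
\P_f(\psi_\alpha = 0) = \P_f(S_n^2 \leq v_{\alpha,n}) \leq \P_f\pa{S_n^2 \leq \sigma^2 + \sqrt{\tfrac{C(B)}{\alpha n}}},
$$
and since $\E_f[S_n^2] = \sigma^2 + d(\varepsilon,\mu_1,\mu_2) \geq \sigma^2 + \rho$, the event on the right forces $S_n^2$ to deviate below its mean by at least $\rho - \sqrt{C(B)/(\alpha n)}$. A second application of Chebyshev under $\P_f$ then bounds this by $\Var_f(S_n^2)/(\rho - \sqrt{C(B)/(\alpha n)})^2$.

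The main technical point, and the step I expect to require the most care, is a uniform bound on $\Var_f(S_n^2)$ over the whole class $\F_1[M]$ in terms of $B$ and $M$ alone. The fourth moment of $f = (1-\varepsilon)\phi(\cdot-\mu_1) + \varepsilon\phi(\cdot-\mu_2)$ is not controlled by $B$ directly, but after recentering one can write $X_i = Y_i + Z_i$ where $Y_i$ has density $\phi$ and $Z_i \in \{\mu_1,\mu_2\}$ with $|Z_i - \E[Z_i]| = |\mu_2 - \mu_1| \cdot (\text{something} \leq 1) \leq M$; expanding $(X_i - \bar X_n)^2$ and bounding moments of $Y_i$ by $B$ (and of the centered $Z_i$ by $M^2$) yields $\Var_f(S_n^2) \leq C(B,M)/n$ for a constant depending only on $B$ and $M$. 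Assembling the pieces, $\P_f(\psi_\alpha = 0) \leq \dfrac{C(B,M)/n}{(\rho - \sqrt{C(B)/(\alpha n)})^2}$, and choosing $\rho \geq C(\alpha,\beta,M,B)/\sqrt n$ with $C(\alpha,\beta,M,B)$ large enough (so that, say, $\rho \geq 2\sqrt{C(B)/(\alpha n)}$ and the resulting bound is at most $\beta$) gives the claim. The final step is merely solving the inequality $\dfrac{C(B,M)/n}{(\rho/2)^2} \leq \beta$ for $\rho$, which produces the announced rate $\rho \geq C(\alpha,\beta,M,B)/\sqrt n$.
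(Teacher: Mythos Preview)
Your proposal is correct and follows essentially the same route as the paper: bound $v_{\alpha,n}$ by Chebyshev under $H_0$, then apply Chebyshev again under $\P_f$ and control $\Var_f(S_n^2)$ via a uniform bound on the fourth central moment of $f$ over $\F_1[M]$. The only cosmetic difference is that the paper invokes the classical identity $\Var(S_n^2)\leq \E_f[(X_1-\E_f X_1)^4]/n$ (citing Wilks) and then computes this fourth moment directly from the mixture form, obtaining the explicit bound $(M^2+\sqrt{B})^2$, whereas you sketch the same bound through the coupling $X_i=Y_i+Z_i$; both arguments yield $\Var_f(S_n^2)\leq C(B,M)/n$ and the conclusion is identical.
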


In the Gaussian case, $ \int_{\R} x^4 \phi_G(x) \,\mathrm{d}x =3$. Hence,
Proposition~\ref{Prop:upperbound:dense} assesses the optimality of the lower
bound given in Theorem~\ref{Th:lowerbound:dense}. Note that the value
of $\rho$ proposed in \eqref{rho:testvariance} differs from
$\rho^\star$ by constant. Finding the optimal constant for our
testing problem is a very difficult question that is out of the scope of
this paper. For interested reader, we mention the work of \cite
{Ingster} in a slightly different (asymptotic) setting.

The result given in Proposition~\ref{Prop:upperbound:dense} seems more
efficient than the one stated in Theorem~\ref{Th:majo} since
the condition to control by $\beta$ the second kind error is
$\varepsilon(1-\varepsilon) (\mu_2-\mu_1)^2 >C/\sqrt{n}$ instead
of $C\sqrt{\log\log(n)}/\sqrt{n}$. Nevertheless, the test based on
the variance would fail in the asymptotic sparse regime (see
Sections~\ref{s:asymptotic} and \ref{SecVarDiscussion} for more
details). This is not satisfactory from a practical point
of view since our aim is to provide a testing procedure which adapts to
all possible situations.

\subsection{An asymptotic study}

The results stated in Theorems \ref{Th:lowerbound:dense} and \ref
{Th:majo} are non-asymptotic. In this section, we will adopt an
asymptotic point of view for our testing problem in the Gaussian
setting. As in \cite{Donoho_Jin}, we will work with the following
parametrization
%
\begin{equation}\label{equ:dense_regime}
\varepsilon\mathop{\sim}_{n\rightarrow+\infty}  n^{-\delta}\quad  \mbox{and}\quad
\mu_2 - \mu_1 \mathop{\sim}_{n\rightarrow+\infty}
n^{-r} \qquad \mbox{with } 0<\delta\leq\tfrac{1}{2} \mbox{ and } 0<r<
\tfrac{1}{2}.
\end{equation}

%
\begin{cor} \label{Corollaire:dense}
The detection boundary in the \textit{dense} regime \eqref
{equ:dense_regime} is $r^*(\delta) = \frac{1} 4 - \frac\delta2$: the
detection is possible when $r<r^*(\delta) = \frac{1} 4 - \frac\delta
2$ and impossible if $r>r^*(\delta)$.

In particular, setting $ f(\cdot)=(1-\varepsilon)\phi_G(\cdot-\mu_1) +
\varepsilon\phi_G(\cdot-\mu_2) $, we have, for $n$ large enough,
\[
\P_f(\Psi_{\alpha}=0)\leq\beta\quad  \mbox{and}\quad
\P_f(\psi_{\alpha
}=0)\leq\beta, %
\]
provided $r<r^*(\delta)$, where the tests $\Psi_\alpha$ and $\psi
_\alpha$ are respectively, defined in \eqref{def:testStatOrd} and
\eqref{eq:test_variance}
\end{cor}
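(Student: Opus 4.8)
The plan is to derive Corollary~\ref{Corollaire:dense} as a direct consequence of the two non-asymptotic results already established, Theorems~\ref{Th:lowerbound:dense} and~\ref{Th:majo}, together with the variance-based Proposition~\ref{Prop:upperbound:dense}, after translating each of them into the asymptotic parametrization \eqref{equ:dense_regime}. The first thing I would do is compute the order of magnitude of the ``distance'' $d(\varepsilon,\mu_1,\mu_2)=\varepsilon(1-\varepsilon)(\mu_2-\mu_1)^2$ under \eqref{equ:dense_regime}: since $\varepsilon\sim n^{-\delta}$ with $\delta\le 1/2$ we have $1-\varepsilon\to 1$, and $(\mu_2-\mu_1)^2\sim n^{-2r}$, so $d(\varepsilon,\mu_1,\mu_2)\sim n^{-(\delta+2r)}$. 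Note also that $\mu_2-\mu_1\sim n^{-r}\to 0$, so for $n$ large enough $\mu_2-\mu_1\le M$ for any fixed $M>0$, i.e.\ $f\in\F_{1,G}[M]$; thus the theorems of Section~\ref{s:dense} all apply.

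For the \emph{possibility} half ($r<r^*(\delta)=\tfrac14-\tfrac\delta2$), observe that $r<\tfrac14-\tfrac\delta2$ is equivalent to $\delta+2r<\tfrac12$, hence $n^{-(\delta+2r)}$ is eventually much larger than $n^{-1/2}$, and a fortiori larger than $C(\alpha,\beta,M)\sqrt{\log\log(n)/n}$ for $n$ large. Taking $\rho=d(\varepsilon,\mu_1,\mu_2)$, condition \eqref{Conddense} of Theorem~\ref{Th:majo} is satisfied for $n$ large enough (the extra technical condition $8.25\log(4\log_2(n/2)/\alpha)/n\le\int_M^\infty\phi_G$ is clearly met for $n$ large since the left side tends to $0$), so $\P_f(\Psi_\alpha=0)\le\beta$; similarly condition \eqref{rho:testvariance} of Proposition~\ref{Prop:upperbound:dense} is satisfied (with $B=3$ in the Gaussian case), giving $\P_f(\psi_\alpha=0)\le\beta$. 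This yields both displayed inequalities.

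For the \emph{impossibility} half ($r>r^*(\delta)$, i.e.\ $\delta+2r>\tfrac12$), I would invoke Theorem~\ref{Th:lowerbound:dense}. One checks that the lower bound $\rho^\star$ there is of order $\sqrt{1/n}$ (the factor $C(M)$ is a fixed constant for fixed $M$, and $\sqrt{1+\log[c(\alpha,\beta)]/(2n)}\to 1$); more precisely, for any fixed $\alpha,\beta,M$ and $n$ large, $\rho^\star\ge c_0/\sqrt{n}$ for some constant $c_0>0$. Since $d(\varepsilon,\mu_1,\mu_2)\sim n^{-(\delta+2r)}=o(n^{-1/2})=o(\rho^\star)$ when $\delta+2r>\tfrac12$, the alternative $f$ satisfies $d(\varepsilon,\mu_1,\mu_2)<\rho^\star$ for $n$ large, so by Theorem~\ref{Th:lowerbound:dense} every level-$\alpha$ test has second kind error exceeding $\beta$ at that $f$; hence detection is impossible. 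The one point requiring a little care — and the only real ``obstacle'' — is the boundary-constant bookkeeping: one must fix $\alpha,\beta,M$ first, then let $n\to\infty$, and verify that the constants $C(\alpha,\beta,M)$, $C(\alpha,\beta,M,B)$, $C(M)$, $c(\alpha,\beta)$ are all $n$-independent so that the comparison between $n^{-(\delta+2r)}$ and the threshold rates $\sqrt{\log\log n/n}$, $1/\sqrt n$ is governed purely by the exponent $\delta+2r$ versus $\tfrac12$. Once this is noted, the corollary follows immediately.
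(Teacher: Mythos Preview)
Your proposal is correct and matches the paper's approach exactly: the paper omits the proof entirely, stating that Corollary~\ref{Corollaire:dense} ``can be obviously deduced from Theorems~\ref{Th:lowerbound:dense} and~\ref{Th:majo}'', and your derivation (computing $d(\varepsilon,\mu_1,\mu_2)\sim n^{-(\delta+2r)}$ and comparing the exponent $\delta+2r$ to $1/2$) is precisely that deduction. One small wording point in the impossibility half: Theorem~\ref{Th:lowerbound:dense} is a minimax statement---for every level-$\alpha$ test there exists \emph{some} $f$ in $\{d\ge\rho\}\cap\F_{1,G}[M]$ with second-kind error exceeding $\beta$---so ``at that $f$'' should read ``at some alternative in the class''; this does not affect the conclusion that detection (in the minimax sense) is impossible when $r>r^*(\delta)$.
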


The proof of Corollary~\ref{Corollaire:dense} is omitted since it can
be obviously deduced from Theorems \ref{Th:lowerbound:dense} and \ref
{Th:majo}. These results are therefore different from the one obtained
in a dense regime in a contamination framework where one wants to test
$H_0:f=\phi_G(\cdot)$ against $H_1: f\in\{(1-\varepsilon)\phi_G(\cdot) +
\varepsilon\phi_G(\cdot-\mu); \varepsilon\in\,]\,0,1[, \mu\in\mathbb
{R}\}$. In this case, as mentioned in introduction, the detection is
possible in the dense regime for $r<\frac{1} 2 - \delta$
(see \cite{Ingster,Donoho_Jin}). This difference is due to the fact that
the mean under $H_0$ is unknown, which makes the testing problem harder.

\section{Sparse mixtures}
\label{s:asymptotic}

In the previous part, we have considered the case where the term $\mu
_2 - \mu_1$ is bounded under the alternative hypothesis. In this
section, we will consider the situation where this quantity is allowed
to tend to infinity as $n$ increases. It appears that in such a
framework, the most interesting cases correspond to the situation where
$\varepsilon\ll \frac{1}{\sqrt{n}}$ as $n\rightarrow+\infty$. In
the literature, this regime is called the sparse case.

This setting has been considered for several different kinds of
distributions. In particular, optimal separation conditions on the
behavior of $\mu_2-\mu_1$ as $n\rightarrow+\infty$ have been
displayed in various situations. In the following, we prove that our
testing procedure provides a satisfying behavior in this sparse
setting: in particular, we prove that it reaches the optimal separation
conditions established in \cite{Donoho_Jin} in both the Gaussian and
the Laplace cases.

\subsection{The Gaussian case}
Let $\mathcal{F}_0$ and $\mathcal{F}_1$ be the sets defined by \eqref
{F0} and \eqref{F1} respectively. Given an i.i.d. sample $X_1,\dots,
X_n$ having a common density $f$, we test in this part
\[
\mbox{``}f\in\mathcal{F}_0\mbox{''} \quad \mbox{against}\quad  \mbox{``}f\in\mathcal{F}_1\mbox{''},
\]
in the particular case where $\phi(\cdot)=\phi_G(\cdot)$, the standard
Gaussian density. In this setting, the so-called \textit{sparse}
regime introduced in \cite{Donoho_Jin} is characterized by
%
\begin{equation}\label{equ:sparse_regime}
\varepsilon\mathop{\sim}_{n\rightarrow+\infty}  n^{-\delta} \quad \mbox{and}\quad
\mu_2 - \mu_1 \mathop{\sim}_{n\rightarrow+\infty}
\sqrt{2r \log(n)} \qquad \mbox{with } \tfrac{1} 2 <\delta<1 \mbox{ and } 0<r<1.
\end{equation}
Below, we analyze the performances of our testing procedure \eqref
{def:testStatOrd} in this \textit{sparse} regime. The corresponding
proof is provided in Section~\ref{s:preuve_sparse_gaussien}.

%
\begin{them}\label{Prop:sparse:statord}
Let $X_1,\ldots,X_n$ be i.i.d. real random variables with common
density $f$. Let $\alpha\in\,]\,0,1[$ and consider the level-$\alpha$
test $\Psi_{\alpha}$
defined by \eqref{def:testStatOrd}. We consider the case where $\phi=
\phi_G$.

We assume that the behavior of $(\varepsilon,\mu_1,\mu_2)$ is governed
by \eqref{equ:sparse_regime} and that $r>r^*(\delta) $ with
\[
r^*(\delta) = \lleft\{ %
\begin{array} {l@{\qquad} l} \delta- \frac{1} 2
& \mbox{ if } \frac{1} 2 < \delta<\frac{3} 4,
\\\noalign{\vspace*{2pt}}
(1-\sqrt{1-\delta})^2 & \mbox{ if } \frac{3} 4 \leq\delta<1.
\end{array} %
\rright.
\]
Then, setting
$ f(\cdot)=(1-\varepsilon)\phi_G(\cdot-\mu_1) + \varepsilon\phi_G(\cdot-\mu
_2) $, we have, for $n$ large enough,
\[
\P_f(\Psi_{\alpha}=0)\leq\beta. %
\]
\end{them}

In the sparse regime, we exactly recover the separation boundaries that
are already known in the case where the null hypothesis is reduced to a
standard normal density, and the alternative is the mixture $
(1-\varepsilon)\phi_G(\cdot) + \varepsilon\phi_G(\cdot-\mu) $. Hence, the
fact that the mean under $H_0$ is unknown does not affect the
difficulty of the related testing problem in this specific framework.

This proves the optimality of our procedure in the sparse regime.
Indeed, the lower bounds established by \cite{Ingster,Cai_Jeng_Low} in
the case where the null hypothesis is reduced to the standard Gaussian
density also provide lower bounds for our testing problem.
This comes from the fact that
\begin{itemize}[$\bullet$]
\item[$\bullet$] a level-$\alpha$ test for our testing problem is also a
level-$\alpha$ test for testing the null hypothesis ``$f=\phi_G$'',
\item[$\bullet$] the case where the null hypothesis is reduced to the centered
Gaussian density is included in our setting.
\end{itemize}

\subsection{The Laplace case}
In this section, we address the testing problem \eqref{eq:model_test}
in the particular case where $\phi$ corresponds to the Laplace
density, namely $\phi=\phi_L$ where
\[
\phi_L(x) = \tfrac{1}{2} \mathrm{e}^{-|x|},\qquad  \forall x\in
\mathbb{R}.
\]
In other words, given a sample $X_1,\dots,X_n$, our aim is to test
whether the underlying density is $\phi_L(\cdot-\mu)$ for some unknown parameter
$\mu$ or $(1-\varepsilon)\phi_L(\cdot-\mu_1) + \varepsilon\phi
_L(\cdot-\mu_2)$ in the particular case where $\varepsilon= \mathrm{o}(1/\sqrt
{n})$ as $n\rightarrow+\infty$.

In this context, \cite{Donoho_Jin} have proved that the cases of
interest in the \textit{sparse} regime correspond to the following
parametrization
%
\begin{equation}\label{equ:sparse_Laplace}
\varepsilon\mathop{\sim}_{n\rightarrow+\infty}  n^{-\delta}\quad  \mbox{and}\quad
\mu_2 - \mu_1 \mathop{\sim}_{n\rightarrow+\infty}  r
\log(n) \qquad \mbox{with } \tfrac{1} 2 <\delta<1 \mbox{ and } 0<r<1.
\end{equation}
The performances of our testing procedure \eqref{def:testStatOrd} are
described in the following theorem, whose proof is given in
Section~\ref{s:preuve_sparse_laplace}.

%
\begin{them}\label{Prop:sparse:Laplace}
Let $X_1,\ldots,X_n$ be i.i.d. real random variables with common
density $f$. Let $\alpha\in\,]\,0,1[$ and consider the level-$\alpha$
test $\Psi_{\alpha}$
defined by \eqref{def:testStatOrd}. We consider the case where $\phi=
\phi_L$.

We assume that the behavior of $(\varepsilon,\mu_1,\mu_2)$ is governed
by \eqref{equ:sparse_Laplace} and that $r>r^*(\delta) $ with
\[
r^*(\delta) = 2\delta- 1.
\]
Then, setting
$ f(\cdot)=(1-\varepsilon)\phi_L(\cdot-\mu_1) + \varepsilon\phi_L(\cdot-\mu
_2) $, we have, for $n$ large enough,
\[
\P_f(\Psi_{\alpha}=0)\leq\beta. %
\]
\end{them}

Remark that the detection boundary $r^*(\delta)$ is the same that have
been exhibited by \cite{Donoho_Jin}. Once again, these lower bounds
remain valid since:
\begin{itemize}[$\bullet$]
\item[$\bullet$] a level-$\alpha$ test for our testing problem is also a
level-$\alpha$ test for testing the null hypothesis ``$f=\phi_L$'',
\item[$\bullet$] the case where the null hypothesis is reduced to the centered
Laplace density is included in our setting.
\end{itemize}

\subsection{The variance test for sparse mixtures: A heuristic discussion}
\label{SecVarDiscussion}

We point out that the testing procedure introduced in Section~\ref
{SecVar} will not be convenient in this asymptotic sparse setting.
Indeed, we can remark that
\[
\Var_{\phi}(X_i) = \int_\mathbb{R}
x^2 \phi(x) \,\mathrm{d}x,
\]
while, for any $f=(1-\varepsilon)\phi(\cdot-\mu_1) + \varepsilon\phi
(\cdot-\mu_2) $
\[
\Var_{f}(X_i) = \int_\mathbb{R}
x^2 \phi(x)\,\mathrm{d}x + \varepsilon (1-\varepsilon) (\mu_1 -
\mu_2)^2.
\]
For both Gaussian and Laplace mixtures, in the respective asymptotic
schemes \eqref{equ:sparse_regime} and \eqref{equ:sparse_Laplace}, we
get that
\[
\Var_{f}(X_i) - \Var_{\phi}(X_i)
= \varepsilon (1-\varepsilon) (\mu_1-\mu_2)^2
\ll \frac{1}{\sqrt{n}}, \qquad \mbox{as } n\rightarrow+\infty.
\]
Since the variance is estimated at a parametric ``rate'' $1/\sqrt{n}$,
the test $\psi_\alpha$ introduced in \eqref{eq:test_variance} will fail
in this setting: it will not be able to separate $H_0$ from $H_1$ with
an appropriate power.

\section{Simulation study}
\label{s:simu}

In this section, we provide some numerical experiments in order to
enhance the performances of our testing procedure $\Psi_\alpha$.
Comparisons with the Higher Criticism and the Kolmogorov--Smirnov test
are provided. Since these both procedures are not designed for the
considered framework (translated model with unknown mean),
straightforward modifications are proposed. We have also included in
these numerical experiments the test based on the variance defined in
Section~\ref{SecVar}.

\subsection{Contamination of \texorpdfstring{$\phi_G$}{$phi_G$}}
In this section, we deal with the framework considered in \cite
{Donoho_Jin}: the mean under $H_0$ is assumed to be known (equal to 0)
and equal to $\mu_1$. More formally, given $(X_1,\ldots,X_n)$, i.i.d.
random variables with an unknown density function $f$, our aim is to test
%
\begin{equation}\label{eq:model_donoho}
H_0: f(\cdot)=\phi_G(\cdot) \mbox{ against } H_1:
f\in\bigl\{x\mapsto (1-\varepsilon)\phi_G(x) + \varepsilon
\phi_G(x-\mu); \mu\in \mathbb{R}, \varepsilon\in\,]\,0,1[\bigr\}.
\end{equation}
In this case, our testing procedure $\Psi_\alpha$ described in \eqref
{def:testStatOrd} can be easily adapted as follows:
\[
\tilde\Psi_{\alpha} = \sup_{k \in\K_n} \{ \mathds
{1}_{X_{(n-k+1)} > q_{\alpha_n,k}} \}, %
\]
where $q_{\alpha,k}$ is the $(1-\alpha)$-quantile of $X_{(n-k+1)}$
under the null hypothesis, $\K_n=\{2^j; 0\leq j \leq[\log_2(n/2)]\}$ and
\[
\alpha_n=\sup\bigl\{u \in\,]\,0,1[, \mathbb{P}_{H_0} (\exists k
\in\K _n, X_{(n-k+1)} > q_{u,k} ) \leq\alpha\bigr\}.
\]
For the sake of brevity, we do not exhibit a theoretical study of the
performances of this procedure for the testing problem \eqref
{eq:model_donoho}. Indeed, the methodology is rather close to the one
proposed in this paper, up to some technical modifications. It is
possible to see that this procedure achieves the optimal asymptotic
separation set in both the \textit{dense} and \textit{sparse}
regimes, as described in \cite{Donoho_Jin}.

The power of our testing procedure is compared with the one of
\begin{itemize}[$\bullet$]
\item[$\bullet$] Kolmogorov--Smirnov test:
The level-$\alpha$ test function is $\psi_{\mathrm{KS},\alpha} = \mathds
{1}_{T_{\mathrm{KS}}> q_{\mathrm{KS},\alpha}}$ where
\[
T_{\mathrm{KS}}=\sup_{x\in\mathbb{R}}  \sqrt{n}\bigl|F_n(x) -
\Phi_G(x)\bigr| %
\]
with the empirical distribution function $F_n(x)=\frac{1} n
\sum^{n}_{i=1} \mathds{1}_{X_i\leq x}$,
$\Phi_G$ the cumulative distribution function of the standard Gaussian
variable, and $q_{\mathrm{KS},\alpha}$ is the $(1-\alpha)$ quantile of
$T_{\mathrm{KS}}$ under $H_0$.

\item Higher Criticism \cite{Donoho_Jin}:
Let $p_i=\mathbb{P}(Z>X_i)$ where $Z\sim\mathcal{N}(0,1)$ for all
$i\in\{1,\ldots,n\}$ and $p_{(1)}\leq p_{(2)}\leq\cdots\leq p_{(n)}$.
This test is based on
\[
\operatorname{HC} =  \max_{1\leq i \leq n} \frac{\sqrt{n}  (\sfrac{i}{n} - p_{(i)} )}{\sqrt{p_{(i)} (1-p_{(i)})}}. %
\]
The level-$\alpha$ test function is $\psi_{\mathrm{HC},\alpha} = \mathds
{1}_{\mathrm{HC}> q_{\mathrm{HC},\alpha}}$ where $q_{\mathrm{HC},\alpha}$ is the $(1-\alpha)$
quantile of $\operatorname{HC}$ under $H_0$.
\item The test based on the variance (see Section~\ref{SecVar}).\vadjust{\goodbreak}
\end{itemize}

In order to study the power of these testing procedures, a Monte-Carlo
procedure is considered with $N=100\,000$ samples of size $n=100$ from a
mixture distribution $(1-\varepsilon)\phi_G(\cdot) +\linebreak[4]  \varepsilon\phi
_G(\cdot-\mu)$ with $\varepsilon\in\{0.05,0.15,0.25,0.35,0.45\}$ and
$\mu\in[0,10]$. The power functions of these testing procedures in
the different scenarios are reported in Figure~\ref{Fig:Power:contam}.

%
\begin{figure}[b]

\includegraphics{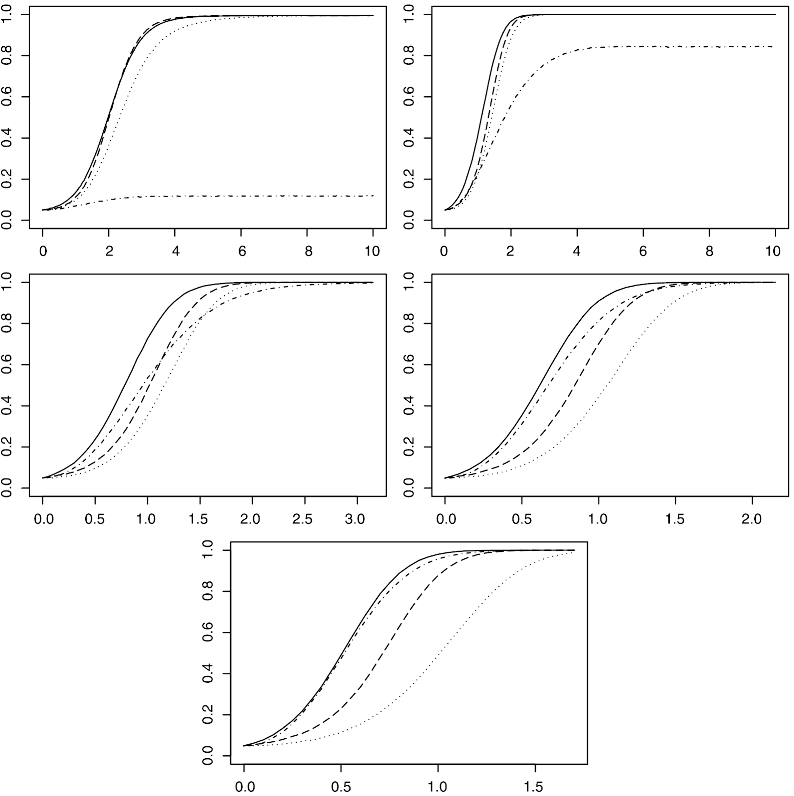}

\caption{Power function of the three considered testing procedures
(continuous line for our test $\tilde\Psi_\alpha$, dashed line for
Higher Criticism, dashed/dotted line for the Kolmogorov--Smirnov test
and dotted line for the test based on the variance) according to $\mu
$, for $\varepsilon= 0.05$ (top left), $0.15$ (top right), $0.25$
(middle left), $0.35$ (middle right) and $0.45$ (bottom) in a
contamination framework.}
\label{Fig:Power:contam}
\end{figure}

It appears that our procedure performs as well as the Higher Criticism
when $\varepsilon$ is small w.r.t. the size of the sample, while the
Kolmogorov--Smirnov test possesses a bad behavior. Such a setting is
close to the
\textit{sparse} regime. Nevertheless, the performances of the Higher
Criticism deteriorates as $\varepsilon$ increases while the power of our
test $\tilde\Psi_\alpha$ remains stable. In this setting, the test
based on the variance does not perform very well. The main reason is
that, in this case, the mean under $H_0$ is known. Hence, a test based
on the empirical mean of the observations would be more appropriate.

\subsection{Gaussian mixtures with unknown means}\label{subsection:simu2}
In this section, we deal with our testing problem. A simulation study
is proposed in order to investigate the power of our testing procedure
$\Psi_\alpha$ described by \eqref{def:testStatOrd}.
Our testing
procedure is compared with the following adaptations of
Kolmogorov--Smirnov test and Higher Criticism:
\begin{itemize}[$\bullet$]
\item[$\bullet$] Kolmogorov--Smirnov test:
The level-$\alpha$ test function is $\widehat{\psi}_{\mathrm{KS},\alpha} =
\mathds{1}_{\hat{T}_{\mathrm{KS}}> \hat{q}_{\mathrm{KS},\alpha}}$ where
\[
\hat{T}_{\mathrm{KS}}=\sup_{x\in\mathbb{R}}  \sqrt{n}\bigl|F_n(x)
- \Phi_G(x-\bar X)\bigr| %
\]
with the empirical mean $\bar X$, the empirical distribution function
$F_n(x)=\frac{1} n \sum_{i=1}^{n} \mathds
{1}_{X_i\leq x}$, and $\hat{q}_{\mathrm{KS},\alpha}$ is the $(1-\alpha)$
quantile of $\hat{T}_{\mathrm{KS}}$ under $H_0$.

\item[$\bullet$] Higher Criticism \cite{Donoho_Jin}:
Let $\hat{p}_i=\mathbb{P}(Z-\bar X>X_i)$ where $Z\sim\mathcal
{N}(0,1)$ for all $i\in\{1,\ldots,n\}$ and $\hat{p}_{(1)}\leq\hat
{p}_{(2)}\leq\cdots\leq\hat{p}_{(n)}$.
This test is based on
\[
\widehat{\operatorname{HC}} = \max_{1\leq i \leq n}  \frac{\sqrt{n}
 (\sfrac{i}{n} - \hat{p}_{(i)} )}{\sqrt{\hat{p}_{(i)}
(1-\hat{p}_{(i)})}}. %
\]
The level-$\alpha$ test function
 is $\hat{\psi}_{\mathrm{HC},\alpha} =
\mathds{1}_{\widehat{\mathrm{HC}}> \hat{q}_{\mathrm{HC},\alpha}}$ where $\hat
{q}_{\mathrm{HC},\alpha}$ is the $(1-\alpha)$ quantile of $\widehat{\operatorname{HC}}$
under $H_0$.
\item[$\bullet$] The test based on the variance (see Section~\ref{SecVar}).
\end{itemize}

In order to study the power of these testing procedures, a Monte-Carlo
procedure is considered with $N=100\,000$ samples of size $n=100$ from a
mixture distribution $(1-\varepsilon)\phi_G(\cdot) + \varepsilon\phi
_G(\cdot-\mu_2)$ with $\varepsilon\in\{0.05,0.15,0.25,0.35,0.45\}$. We
deal with $\mu_1=\mu=0$ and $\mu_2\in[0,10]$. The power functions
of these testing procedures in the different scenarios are reported in
Figure~\ref{Fig:Power:unknownmean}.
%
\begin{figure}

\includegraphics{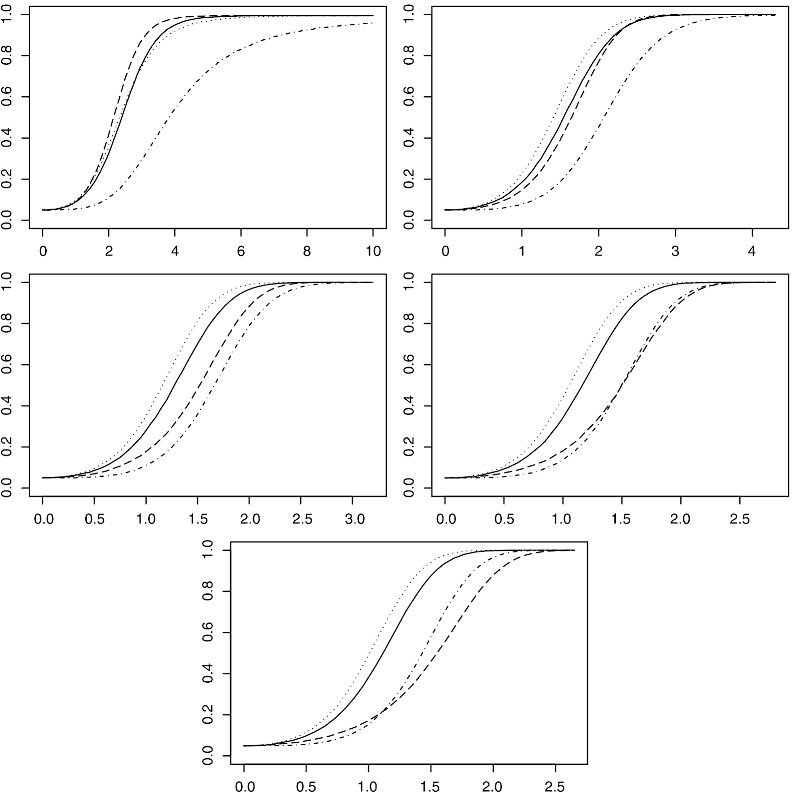}

\caption{Power function of the three considered testing procedures
(continuous line for our test $\Psi_\alpha$, dashed line for Higher
Criticism, dashed/dotted line for the Kolmogorov--Smirnov test and
dotted line for the test based on the variance) according to $\mu_2$,
for $\varepsilon= 0.05$ (top left), $0.15$ (top right), $0.25$ (middle
left), $0.35$ (middle right) and $0.45$ (bottom) in the Gaussian
mixture framework.}
\label{Fig:Power:unknownmean}
\end{figure}

Once again, our testing procedure appears to be competitive w.r.t. the
existing procedures, and even offers better performances in some
particular cases. As in the previous experiment, the behavior of the
Higher Criticism deteriorates w.r.t. our procedure as $\varepsilon$
increases, namely when we leave the \textit{sparse} regime to the
\textit{dense} one. In this setting, the test based on the variance is
quite competitive.\vadjust{\goodbreak}

Remark that the considered setting is not asymptotic at all since the
sample size is $100$. As explained in Section~\ref{SecVarDiscussion},
one can expect that the performances of the test based on the variance
will deteriorate in a sparse asymptotic regime. In order to illustrate
this discussion, we have compared the test based on the variance and
our procedure in a very sparse
context where $n=1000$ and $\varepsilon= 0.001$. The corresponding
values of the power are displayed in Table~\ref{table}.

\subsection{Laplace mixtures with unknown means}
Since our test $\Psi_\alpha$ is adapted for an even density function
$\phi$, a Laplace distribution is here considered: $\phi_L(x)=\frac{1}2
 \exp(-|x|)$. As in Section~\ref{subsection:simu2}, the power of
$\Psi_\alpha$ is compared with the one of Kolmogorov--Smirnov test and
Higher Criticism. Note that these two last tests are adapted as in
Section~\ref{subsection:simu2} but where $\Phi$ and $Z$ are now
associated to the Laplace distribution. The variance-based test
introduced in Section~\ref{SecVar} is also included in these simulations.

A Monte-Carlo procedure is proposed with $N=100\,000$ samples of size
$n=100$ from a mixture distribution $(1-\varepsilon)\phi(\cdot) +
\varepsilon\phi(\cdot-\mu_2)$ with $\varepsilon\in\{
0.05,0.15,0.25,0.35,0.45\}$ and $\mu\in[0,10]$. The power functions
of these testing procedures in the different scenarios are reported in
Figure~\ref{Fig:Power:Laplace}.

Apart in the case where $\varepsilon=0.05$, our test outperforms
Higher Criticism, Kolmogorov--Smirnov and variance-based tests in all other
conditions. As previously, the power of Higher Criticism is
deteriorated as $\varepsilon$ increases.

\section{Proofs}
\label{s:proofs}

\subsection{A preliminary result}
\label{s:general_result}

In this section, we provide a general result that emphasizes the
non-asymptotic performances of our testing procedure.
%
\begin{table}
\tablewidth=250pt
\tabcolsep=0pt
\caption{Comparison of the power of the variance based test (VB) and
our procedure (LMM) for $\varepsilon= 0.001$ and $n=1000$}
\label{table}
\begin{tabular*}{250pt}{@{\extracolsep{\fill}}lllll@{}}
\hline
$\mu_2$ & 2 & 4 & 6 & 8 \\
\hline
LMM & 0.0642 & 0.3006 & 0.6131 & 0.6513 \\
VB & 0.0596 & 0.1147 & 0.2445 & 0.405 \\
\hline
\end{tabular*}
\end{table}
%
\begin{figure}

\includegraphics{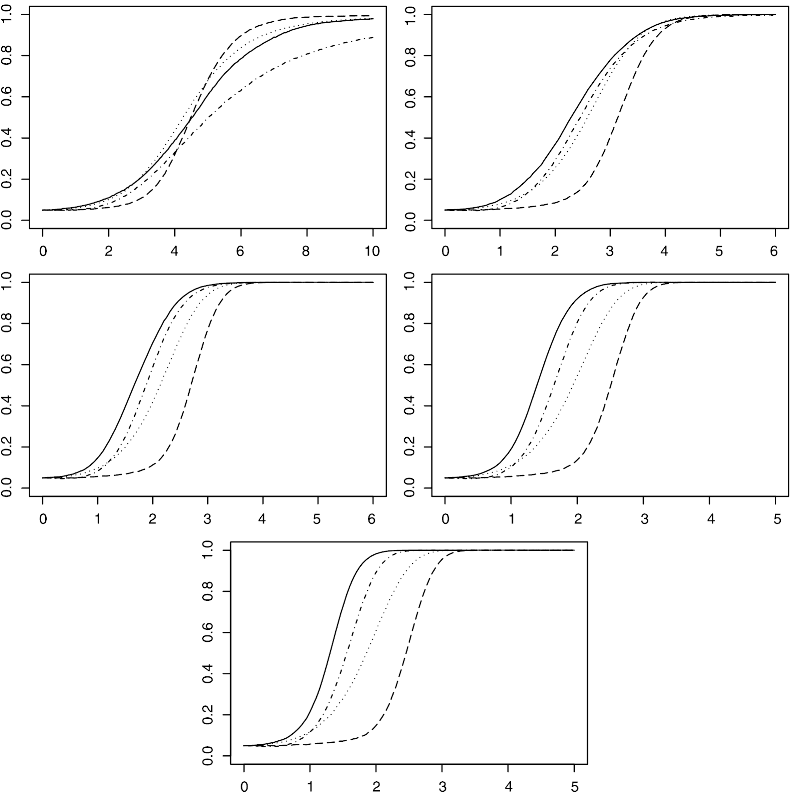}

\caption{Power function of the three considered testing procedures
(continuous line for our test $\Psi_\alpha$, dashed line for Higher
Criticism, dashed/dotted line for the Kolmogorov--Smirnov test and
dotted line for the test based on the variance) according to $\mu_2$,
for $\varepsilon= 0.05$ (top left), $0.15$ (top right), $0.25$ (middle
left), $0.35$ (middle right) and $0.45$ (bottom) in the Laplace
mixture framework.}\vspace*{-6pt}
\label{Fig:Power:Laplace}
\end{figure}

Let $\bar\Phi(x) = 1 - \Phi(x)$, where $\Phi$ is the cumulative
distribution function associated to the density function $\phi$. For
all $\alpha\in\,]\,0,1[$ and $k \in\{1,2, \ldots, n/2 \}$, let $
t_{\alpha,k}$
be a positive real number defined by
%
\begin{equation}
\label{def:talphak} \bar\Phi \biggl(\frac{t_{\alpha,k}}{2} \biggr) = \frac{k} n
\biggl[1-\sqrt{\frac{2\log(\sfrac{4} {\alpha})}{k}} \biggr]
\end{equation}
if $k>2\log(\frac{4} \alpha)$, and $t_{\alpha,k}=+\infty$ otherwise.
For all $\alpha\in\,]\,0,1[$, $\rho>0$, and $k \in\{1,2, \ldots, n/2 \}
$, we consider the subset $ \bar{\mathcal{S}}(\alpha,\rho,k)$ of
$\mathbb{R}^3$
defined by:
%
\begin{eqnarray}
\label{Srho-orderstat}
&&\bar{\mathcal{S}}(\alpha,\rho,k)\nonumber\\[-8pt]\\[-8pt]
&&\quad =\lleft\{ %
\begin{array} {l} (\varepsilon,\mu_1,\mu_2)\in\,]\,0,1[\,\times
\R^2, \mu_2 >\mu_1; \exists c\in\mathbb{R}
\mbox{ such that:}
\\
(1-\varepsilon) \bar\Phi \bigl(t_{\alpha,k} -c + \varepsilon(\mu
_2-\mu_1) \bigr) + \varepsilon\bar\Phi
\bigl(t_{\alpha,k} -c - (1- \varepsilon) (\mu _2-
\mu_1) \bigr) >\rho
\\\noalign{\vspace*{2pt}}
(1-\varepsilon) \bar\Phi \bigl(c - \varepsilon(\mu_2-
\mu_1) \bigr) + \varepsilon\bar\Phi \bigl(c + (1- \varepsilon) (
\mu_2-\mu _1) \bigr) >\rho
\end{array} %
 \rright\}.\nonumber
\end{eqnarray}

When $t_{\alpha,k}=+\infty$, we use the convention $\bar{\mathcal
{S}}(\alpha,\rho,k)= \emptyset$ for all $\rho>0$.

The following proposition highlights the non-asymptotic performances of
the test $\Psi_\alpha$.\vadjust{\goodbreak}

%
\begin{them}\label{Th:testStatOrd}
Let $\alpha\in\,]\,0,1[$ and $\beta\in\,]\,0,1-\alpha[$. Consider the test
$\Psi_{\alpha}$ described in \eqref{def:testStatOrd}.
We assume that $ n \geq8 \log(4/\alpha_n)$.
Consider the alternative sets
\[
\bar{ \F}_1[n,\alpha,\beta] = \biggl\{f(\cdot)=(1-\varepsilon)\phi (\cdot-
\mu_1) + \varepsilon\phi(\cdot-\mu_2); (\varepsilon,
\mu_1,\mu _2)\in\bigcup_{k\in\K_n}  \bar{\mathcal{S}}\bigl(\alpha_n,\rho(k,n),k\bigr) \biggr\}\vadjust{\goodbreak}
\]
where, for all $k \in \K_n$, $\bar{\mathcal{S}}(\alpha_n,\rho
(k,n),k)$ is defined by \eqref{Srho-orderstat} with
\[
\rho(k,n)=\frac{k} n + \frac{1+\sqrt{1+2k\beta}}{n\beta}. %
\]
Then $\Psi_{\alpha}$ is a level-$\alpha$ test and
\[
\sup_{f\in\bar{ \F}_1[n,\alpha,\beta]}
\P_f(\Psi _{\alpha}=0)\leq\beta. %
\]
\end{them}

In this theorem, we have defined a set $ \bar{ \F}_1[n,\alpha,\beta
]$ over which the level-$\alpha$ test statistics $\Psi_{\alpha}$
has a power greater than $1-\beta$. This result holds for all $n$,
provided that $ n \geq8 \log(4/\alpha_n)$, it is non-asymptotic. The
definition of the set $\bar{\mathcal{S}}(\alpha,\rho,k)$ is quite
rough. Nevertheless, it will allow us to describe several situations
for which
the power of our testing procedure will be assessed, in both asymptotic
and non-asymptotic cases.

The condition $ n \geq8 \log(4/\alpha_n)$ ensures that there exists
$k \in\K_n$ such that $k>2\log(4/\alpha_n)$.
Since $\alpha_n \geq\alpha/|\K_n|$, and $| \K_n| \leq\log
_2(n/2)$, this condition is satisfied if
$ n\geq8 \log(4 \log_2(n/2)/\alpha)$. For $\alpha=0.05$, this
condition holds at least for $n \geq49 $.

\subsection{Proof of Theorem \texorpdfstring{\protect\ref{Th:testStatOrd}}{6.1}}
Following the definition of $\alpha_n$, $\Psi_\alpha$ is ensured to
be a level-$\alpha$ test. In order to control the second kind error of
the test $\Psi_\alpha$, we first give an upper bound for $q_{\alpha
_n,k}$. Under the null hypothesis,
there exists $\mu\in\R$ such that $f(\cdot)=\phi(\cdot-\mu)$. Thus
$X_{(n-k+1)} - X_{(k)}$ is distributed as $Y_{(n-k+1)} - Y_{(k)}$ where
$(Y_1,\ldots,Y_{n})$
is a $n$ sample from the density $\phi(\cdot)$. Hence, if we find
$c_{\alpha_n,k}$ such that
$\P(Y_{(n-k+1)}-Y_{(k)}>c_{\alpha_n,k})\leq\alpha_n$ then
$q_{\alpha_n,k} \leq c_{\alpha_n,k}$.
For all $d \in\R$,
\[
\P(Y_{(n-k+1)} - Y_{(k)}>c_{\alpha_n,k}) \leq
\P(Y_{(n-k+1)}> c_{\alpha_n,k} + d) + \P(Y_{(k)}\leq d).
\]
According to Lemma~\ref{Append:Lemma:quant}, if $d$ fulfills $\Phi
(d)\leq\frac{k} n  [1-\sqrt{\frac{2\log(\sfrac{4}{\alpha
_n})}{k}} ]$ then $\P(Y_{(k)}\leq d)\leq\frac{\alpha_n}{2}$.
Moreover, by the same lemma, if $c_{\alpha_n,k}$ is chosen such that
$\bar\Phi(c_{\alpha_n,k}+d) \leq\frac{k} n  [1-\sqrt{\frac
{2\log(\sfrac{4}{\alpha_n})}{k}} ]$
then $\P(Y_{(n-k+1)}\geq c_{\alpha_n,k}+d)\leq\frac{\alpha_n}{2}$.
Choosing $d$ and $c_{\alpha_n,k}$ exactly such that
\[
\Phi(d)=\bar\Phi(c_{\alpha_n,k}+d)=\frac{k} n \biggl[1-\sqrt{
\frac
{2\log(\sfrac{4}{\alpha_n})}{k}} \biggr]
\]
and since $\phi(\cdot)$ is an even continuous function, we obtain that
$d=-\frac{c_{\alpha_n,k}}{2}$.
Finally, choosing $c_{\alpha_n,k} = t_{\alpha_n,k}$ where $\bar\Phi
(\frac{t_{\alpha_n,k}}{2})=\frac{k} n  [1-\sqrt{\frac{2\log
(\sfrac{4}{\alpha_n})}{k}} ]$, $\P_{H_0}(X_{(n-k+1)} - X_{(k)}>
t_{\alpha_n,k})\leq\alpha_n$
and thus $q_{\alpha_n,k} \leq t_{\alpha_n,k}$.

Considering $f\in\bar{ \F}_1[n,\alpha,\beta]$, we want to control
the second kind error of the test:
%
\begin{eqnarray}\label{adapt}
\P_f(\Psi_\alpha=0) &=& \P_f ({\forall k \in
\K _n,X_{(n-k+1)}-X_{(k)}\leq q_{\alpha_n,k}} )
\nonumber
\\[-8pt]\\[-8pt]
&\leq& \inf_{k \in\K_n}\P_f ({X_{(n-k+1)}-X_{(k)}
\leq q_{\alpha_n,k}} ) .\nonumber
\end{eqnarray}
Since $f\in\bar{ \F}_1[n,\alpha,\beta]$, there exist $\varepsilon
\in\,]\,0,1[$ and $(\mu_1,\mu_2)\in\mathbb{R}^2$, $\mu_1<\mu_2$ such that
\[
\forall x\in\R,\qquad  f(x)=(1-\varepsilon)\phi(x-\mu_1) + \varepsilon
\phi(x-\mu_2) %
\]
and for some $k \in\K_n$, there exists a real $c$ such that
$(\varepsilon,\mu_1,\mu_2)$ fulfills the two following conditions:
%
\begin{eqnarray}
\label{Cond1}(1-\varepsilon)\bar\Phi \bigl(t_{\alpha_n,k} - c + \varepsilon(\mu
_2-\mu_1) \bigr) + \varepsilon\bar\Phi
\bigl(t_{\alpha_n,k} - c - (1- \varepsilon) (\mu_2-
\mu_1) \bigr) &>& \rho(k,n),
\\
\label{Cond2}(1-\varepsilon) \bar\Phi \bigl(c - \varepsilon(\mu_2-
\mu_1) \bigr) + \varepsilon\bar\Phi \bigl(c + (1- \varepsilon) (
\mu_2-\mu _1) \bigr) &>& \rho(k,n),
\end{eqnarray}
with $\rho(k,n)=\frac{k} n + \frac{1+\sqrt{1+2k\beta}}{n\beta}$.
Using \eqref{adapt} and the fact that $ q_{\alpha_n,k} \leq t_{\alpha
_n,k} $,
%
\begin{eqnarray}\label{4ProbaControl}
\P_f ({X_{(n-k+1)}-X_{(k)}\leq q_{\alpha_n,k}} ) &
\leq&\P _f(X_{(n-k+1)}-X_{(k)}\leq t_{\alpha_n,k})
\nonumber
\\
&\leq& \P_f\bigl(X_{(n-k+1)}\leq t_{\alpha_n,k} +
\E_f[X_1] - c \bigr)
\\
& &{} +\P_f\bigl( X_{(k)}>\E_f[X_1]
- c\bigr). \nonumber
\end{eqnarray}

For the first term in the right-hand side of \eqref{4ProbaControl},
\begin{eqnarray*}
\P_f \bigl(X_{(n-k+1)} \leq t_{\alpha_n,k} +
\E_f[X_1] - c \bigr) & \leq& \P_f \Biggl(
\sum_{i=1}^n \mathds{1}_{\{X_i\leq t_{\alpha_n,k}
+\E_f[X_1] - c\}} >
n-k \Biggr)
\\
& \leq& \P_f \Biggl(\sum_{i=1}^n
\{\mathds{1}_{\{X_i\leq
t_{\alpha_n,k} +\E_f[X_1] - c \}} - q_1 \}> n (1-q_1) -k
\Biggr)
\end{eqnarray*}
with
\begin{eqnarray*}
q_1 &=&\P_f \bigl(X_1\leq
t_{\alpha_n,k} +\E_f[X_1] - c \bigr)
\\
&=& (1-\varepsilon) \Phi \bigl(t_{\alpha_n,k} +\E_f[X_1]
- c - \mu _1 \bigr) + \varepsilon\Phi \bigl(t_{\alpha_n,k} +
\E_f[X_1] - c - \mu_2 \bigr)
\\
&=& (1-\varepsilon) \Phi \bigl(t_{\alpha_n,k}-c + \varepsilon(\mu
_2-\mu_1) \bigr) + \varepsilon\Phi \bigl(t_{\alpha_n,k}
-c - (1- \varepsilon) (\mu_2-\mu_1) \bigr)
\end{eqnarray*}
since $\E_f[X_1]=(1-\varepsilon) \mu_1 + \varepsilon\mu_2$.
Condition \eqref{Cond1} gives that $n(1-q_1)-k>0$ and using Markov's
inequality,
\[
\P_f \bigl(X_{(n-k+1)} < t_{\alpha_n,k} +
\E_f[X_1] - c \bigr)\leq \frac{nq_1(1-q_1)}{[n(1-q_1) -k]^2} \leq
\frac{n(1-q_1)}{[n(1-q_1) -k]^2}. %
\]
Note that the inequality $\frac{nx}{(nx-k)^2} \leq\frac{\beta}{2}$
is fulfilled if and only if $x\notin [\frac{k} n + \frac{1}{\beta
n} \pm\frac{\sqrt{1 + 2k\beta}}{\beta n} ]$.
Then, since condition \eqref{Cond1} ensures us that $1-q_1\notin
[\frac{k} n + \frac{1}{n\beta} \pm \frac{\sqrt{1 + 2k\beta
}}{n\beta} ]$,
\[
\P_f \bigl(X_{(n-k+1)} < t_{\alpha_n,k}+
\E_f[X_1]-c \bigr) \leq \frac{\beta}{2}. %
\]
For the second term in the right-hand side of \eqref{4ProbaControl},
\[
\P_f \bigl(X_{(k)} > \E_f[X_1]-
c \bigr) \leq \P_f \Biggl(\sum_{i=1}^n
\{\mathds{1}_{\{X_i> \E_f[X_1] - c\}} - q_2 \}> n (1-q_2) -k
\Biggr) %
\]
with
\begin{eqnarray*}
q_2 &=&\P_f \bigl(X_1>
\E_f[X_1] - c \bigr)
\\
&=& (1-\varepsilon) \bar\Phi \bigl(\E_f[X_1] - c -
\mu_1 \bigr) + \varepsilon\bar\Phi \bigl(\E_f[X_1]
- c - \mu_2 \bigr)
\\
&=& (1-\varepsilon) \bar\Phi \bigl(-c +\varepsilon(\mu_2 - \mu
_1) \bigr) + \varepsilon\bar\Phi \bigl(-c - (1-\varepsilon) (\mu
_2-\mu_1) \bigr)
\\
&=& (1-\varepsilon) \Phi \bigl(c - \varepsilon(\mu_2-
\mu_1) \bigr) + \varepsilon\Phi \bigl(c + (1- \varepsilon) (
\mu_2-\mu_1) \bigr).
\end{eqnarray*}
Condition \eqref{Cond2} gives that $n(1-q_2)-k>0$ and using Markov's
inequality,
\[
\P_f \bigl(X_{(k)} > \E_f[X_1] -
c \bigr)\leq\frac{nq_2
(1-q_2)}{[n(1-q_2) -k]^2} \leq\frac{n(1-q_2)}{[n(1-q_2) -k]^2}. %
\]
According to condition \eqref{Cond2}, $1-q_2\notin [\frac{k} n +
\frac{1}{n\beta} \pm \frac{\sqrt{1 + 2k\beta}}{n\beta} ]$,
thus
\[
\P_f \bigl(X_{(k)} >\E_f[X_1] -
c \bigr) \leq\frac{\beta}{2}. %
\]
Finally, $\P_f(\Psi_\alpha=0)\leq\beta$.

\subsection{Proof of Theorem \texorpdfstring{\protect\ref{Th:lowerbound:dense}}{3.1}}
We define
\[
\F_{1,G}[\rho,M] = \bigl\{{f \in \F_{1,G}[M], \varepsilon( 1-
\varepsilon) (\mu_2-\mu_1)^2 \geq\rho} \bigr
\} . %
\]
Let $T_\alpha$ be a level-$\alpha$ test. For all $f\in\F_{1,G}[\rho,M]$,
\begin{eqnarray*}
\P_f(T_\alpha=0) &=& \P_{\phi_G}(T_\alpha=0)+
\P_f(T_\alpha=0) - \P_{\phi_G}(T_\alpha=0)
\\
&\geq& 1-\alpha- \bigl[\P_{\phi_G}(T_\alpha=0) -
\P_f(T_\alpha=0)\bigr].
\end{eqnarray*}
Thus for a density $\tilde f \in\F_{1,G}[\rho,M]$ which has to be
specified after,
\begin{eqnarray*}
\sup_{f\in\F_{1,G}[\rho,M]}  \P_f(T_\alpha=0)
&\geq& 1-\alpha- \bigl[\P_{\phi_G}(T_\alpha=0) -
\P_{\tilde f}(T_\alpha=0)\bigr]
\\
&\geq& 1-\alpha- \|\P_{\phi_G} - \P_{\tilde f}\|_{\mathrm{TV}},
\end{eqnarray*}
where $\|P - Q\|_{\mathrm{TV}}$ denotes the total variation distance between two
probability distributions $P$ and $Q$. Since
$\|\P_{\phi_G} - \P_{\tilde f}\|_{\mathrm{TV}}\leq\sqrt{2 [1 - A(\phi
_G,\tilde f)^n]}$ where $A(\phi_G,\tilde f)=\int_\R\sqrt{\phi
_G(x)\tilde f(x)}\,\mathrm{d}x$ is
the Hellinger affinity between the two density functions $\phi_G$ and~$\tilde f$,
\[
\beta\bigl(\F_{1,G}[\rho,M]\bigr):= \inf_{T_\alpha}
\sup_{f\in\F_{1,G}[\rho,M]}  \P_f(T_\alpha=0)
\geq1-\alpha- \sqrt{2 \bigl[1 - A(\phi_G,\tilde f)^n
\bigr]}. %
\]
If we specify a density $\tilde f\in\F_{1,G}[\rho,M]$ such that
$A(\phi_G,\tilde f) \geq c(\alpha,\beta)^{\sfrac{1}{ n}}$ then
$\beta(\F_{1,G}[\rho,M])\geq1 - \alpha- (1-\alpha-\beta)=\beta
$. Moreover, since
\[
A(\phi_G,\tilde f)\geq1 - \frac{1} 2 \E_\phi
\biggl[ \biggl(\frac
{\tilde f(X)-\phi_G(X)}{\phi_G(X)} \biggr)^2 \biggr],
\]
$A(\phi_G,\tilde f)\geq c(\alpha,\beta)^{\sfrac{1} n}$ is obtained if
$\E_{\phi_G} [ (\frac{\tilde f(X)-\phi_G(X)}{\phi
_G(X)} )^2 ]\leq2 [1-c(\alpha,\beta)^{\sfrac{1}{n}} ]$.

In the sequel, we consider the density $\tilde f = (1-\varepsilon)\phi
(\cdot-\mu_1) + \varepsilon\phi(\cdot-\mu_2)$, with
%
\begin{eqnarray}
\label{Cond:ftilde1}(1-\varepsilon)\mu_1 &=&-\varepsilon\mu_2,
\\
\label{Cond:ftilde2}\max\bigl(\mu_1^2,\mu_2^2,|
\mu_1\mu_2|\bigr)&\leq&\nu^2 =
\frac
{M^2}{4},
\\
\label{Cond:ftilde3}\varepsilon(1-\varepsilon) (\mu_2-\mu_1)^2
&=& \rho.
\end{eqnarray}
In particular, $\tilde f \in\F_{1,G}[\rho,M]$ since $(\mu_2-\mu
_1)^2 \leq M^2$.

For this choice,
\begin{eqnarray*}
&&\E_{\phi_G} \biggl[ \biggl(\frac{\tilde f(X)-\phi_G(X)}{\phi
_G(X)} \biggr)^2
\biggr]\\
&&\quad = \int_\R\frac{[\tilde f(x) - \phi
_G(x)]^2}{\phi_G(x)} \,\mathrm{d}x
\\
&&\quad = \int_\R\frac{\{(1-\varepsilon)[\phi_G(x-\mu_1)-\phi_G(x)] +
\varepsilon[\phi_G(x-\mu_2)-\phi_G(x)]\}^2}{\phi_G(x)} \,\mathrm{d}x
\\
&&\quad = (1-\varepsilon)^2 \biggl[\int_\R
\frac{\phi_G(x-\mu_1)^2}{\phi
_G(x)} \,\mathrm{d}x - 1 \biggr] + \varepsilon^2 \biggl[\int
_\R\frac{\phi
_G(x-\mu_2)^2}{\phi_G(x)} \,\mathrm{d}x - 1 \biggr]
\\
& &\qquad {} + 2\varepsilon(1-\varepsilon) \biggl[\int_\R
\frac{\phi
_G(x-\mu_1) \phi_G(x-\mu_2)}{\phi_G(x)} \,\mathrm{d}x - 1 \biggr].
\end{eqnarray*}
We have $\int_\R\frac{\phi_G(x-\mu_1) \phi_G(x-\mu_2)}{\phi
_G(x)} \,\mathrm{d}x = \exp(\mu_1\mu_2)$, for all $\mu_1,\mu_2 \in\mathbb
{R}$, hence 
\begin{eqnarray*}
\E_{\phi_G} \biggl[ \biggl(\frac{\tilde f(X)-\phi_G(X)}{\phi
_G(X)} \biggr)^2
\biggr] &=& (1-\varepsilon)^2 \bigl[\mathrm{e}^{\mu_1^2} -1 \bigr] +
\varepsilon^2 \bigl[\mathrm{e}^{\mu_2^2} -1 \bigr] + 2\varepsilon(1-
\varepsilon) \bigl[\mathrm{e}^{\mu_1\mu_2} -1 \bigr].
\end{eqnarray*}
Next, using that $|\mathrm{e}^u -1 -u - \frac{1} 2 u^2|\leq\frac{\mathrm{e}^{U^2}}{3!}
|u|^3$ for all $|u|<U$ with condition \eqref{Cond:ftilde2},
\begin{eqnarray*}
\E_{\phi_G} \biggl[ \biggl(\frac{\tilde f(X)-\phi_G(X)}{\phi
_G(X)} \biggr)^2
\biggr] &\leq& (1-\varepsilon)^2 \biggl[\mu_1^2
+ \frac{1} 2 \mu_1^4 + \frac
{\mathrm{e}^{\nu^2}}{3!}
\mu_1^6 \biggr]
\\
& &{} + \varepsilon^2 \biggl[\mu_2^2 +
\frac{1} 2 \mu_2^4 + \frac
{\mathrm{e}^{\nu^2}}{3!}
\mu_2^6 \biggr]
\\
& &{} + 2\varepsilon(1-\varepsilon) \biggl[\mu_1\mu_2 +
\frac{1} 2 \mu _1^2\mu_2^2
+ \frac{\mathrm{e}^{\nu^2}}{3!}|\mu_1\mu_2|^3 \biggr]
\\
&\leq& \bigl[(1-\varepsilon)\mu_1 + \varepsilon\mu_2
\bigr]^2 + \frac{1} 2 \bigl[(1-\varepsilon)
\mu_1^2 + \varepsilon\mu_2^2
\bigr]^2
\\
& &{} + \frac{\mathrm{e}^{\nu^2}}{3!} \bigl[(1-\varepsilon)|\mu_1|^3
+ \varepsilon|\mu_2|^3 \bigr]^2.
\end{eqnarray*}
The parameters of $\tilde f$ are constrained such that $(1-\varepsilon
)\mu_1 + \varepsilon\mu_2=0$ thus
\begin{eqnarray*}
&&\E_{\phi_G} \biggl[ \biggl(\frac{\tilde f(X)-\phi_G(X)}{\phi
_G(X)} \biggr)^2
\biggr] \\
&&\quad \leq \frac{1} 2 \bigl[(1-\varepsilon)\varepsilon(
\mu_2-\mu _1)^2 \bigr]^2 +
\frac{\mathrm{e}^{\nu^2}}{3!} \bigl\{(1-\varepsilon)\varepsilon|\mu _2-
\mu_1|^3 \bigl[\varepsilon^2 + (1-
\varepsilon)^2\bigr] \bigr\}^2
\\
&&\quad \leq (1-\varepsilon)^2 \varepsilon^2 (
\mu_2-\mu_1)^4 \biggl[\frac{1} 2 +
\frac{\mathrm{e}^{\nu^2}}{3!} (\mu_2-\mu_1)^2 \bigl[
\varepsilon^2 + (1-\varepsilon)^2\bigr]^2
\biggr]
\\
&&\quad \leq C^2(M) \bigl[(1-\varepsilon) \varepsilon(\mu_2-
\mu _1)^2 \bigr]^2 = C^2(M)
\rho^2
\end{eqnarray*}
with $C^2(M) = \frac{1}{2} + \frac{2}{3} M^2 \mathrm{e}^{M^2/4}$. Moreover, if
$u<0$, $1-\mathrm{e}^u\geq-u -\frac{1} 2 u^2$ thus $1-c(\alpha,\beta)^{\sfrac{1}{n}}
\geq-\frac{1} n \log c(\alpha,\beta) - \frac{1}{2}  (\frac
{\log c(\alpha,\beta)}{n} )^2$.
Then, the condition
\[
\rho= (1-\varepsilon) \varepsilon(\mu_2-\mu_1)^2
< \frac{1}{C(M)} \sqrt{ -\frac{2} n \log c(\alpha,\beta) - \biggl(
\frac{\log c(\alpha
,\beta)}{n} \biggr)^2 }:=\rho^\star %
\]
implies that $\beta(\F_{1,G}[\rho,M]) > \beta$.

\subsection{Proof of Theorem \texorpdfstring{\protect\ref{Th:majo}}{3.2}}

Let $f(\cdot)=(1-\varepsilon) \phi_G(\cdot-\mu_1) + \varepsilon\phi
_G(\cdot-\mu_2) \in\F_{1,G}[\rho,M] $ where $\rho$ satisfies \eqref
{Conddense}.
We will prove that $f \in\bar{\mathcal{F}}_1[n,\alpha,\beta]$ and
the result will be a consequence of Theorem~\ref{Th:testStatOrd}.
In the following, we consider
$k \in\K_n$ such that
\[
\frac{0.99}{2} \bar\Phi_G(M)\leq\frac{k}{n} \leq 0.99
\bar\Phi _G(M) .
\]
Note that this is possible since, under the assumptions of Theorem~\ref
{Th:majo}, $ 0.99 \bar\Phi_G(M) n \geq1$.
Note that
$|\K_n| \leq\log_2(n/2)$, hence $\alpha_n \geq\alpha/|\K_n| \geq
\alpha/\log_2(n/2)$. We will show that $(\varepsilon, \mu_1,\mu
_2)\in\bar{\mathcal{S}}(\alpha_n,\rho(k,n),k)$:
Considering $c= {t_{\alpha_n,k}}/{2}$ and denoting $\tau=\mu_2-\mu
_1$, we want to prove that
%
\begin{eqnarray}
\label{E1}(1-\varepsilon) \bar\Phi_G \biggl(\frac{t_{\alpha_n,k}}{2} + \varepsilon
\tau \biggr) + \varepsilon\bar\Phi_G \biggl(\frac{t_{\alpha_n,k}}{2} - (1-
\varepsilon) \tau \biggr) &>& \rho(k,n),
\\
\label{E2}(1-\varepsilon) \bar\Phi_G \biggl(\frac{t_{\alpha_n,k}}{2} - \varepsilon
\tau \biggr) + \varepsilon\bar\Phi_G \biggl(\frac{t_{\alpha_n,k}}{2} + (1-
\varepsilon) \tau \biggr) & >& \rho(k,n)
\end{eqnarray}
hold, with $\rho(k,n)=\frac{k} n + \frac{1}{n\beta} + \frac{\sqrt{1
+ 2k\beta}}{n\beta}$.

We use a Taylor expansion at the order 2, the terms of order 1 vanish
and this leads to:
\begin{eqnarray*}
&&(1-\varepsilon) \bar\Phi_G \biggl(\frac{t_{\alpha_n,k}}{2} +\varepsilon
\tau \biggr) + \varepsilon\bar\Phi_G \biggl(\frac
{t_{\alpha_n,k}}{2} - (1-
\varepsilon)\tau \biggr)
\\
&&\quad  = \bar\Phi_G \biggl(\frac{t_{\alpha_n,k}}{2} \biggr) +
\frac{1} 2 (1-\varepsilon) \varepsilon\tau^2 \bigl[\varepsilon
\bigl(-\phi_G'(a)\bigr) + (1-\varepsilon) \bigl(-
\phi_G'(b)\bigr) \bigr],
\end{eqnarray*}
where $a$ (resp. $b$) belongs to the interval $ ]\frac{t_{\alpha
_n,k}}{2},\frac{t_{\alpha_n,k}}{2}+\varepsilon\tau [$
(resp. $ ]\frac{t_{\alpha_n,k}}{2}-(1-\varepsilon)\tau,\frac
{t_{\alpha_n,k}}{2} [$).

We recall that $\bar\Phi_G (\frac{t_{\alpha_n,k}}{2} ) =
\frac{k}{n}  [1 - \sqrt{\frac{2 \log( 4 /\alpha_n)}{k}} ]$.
Hence, in order to prove that \eqref{E1} holds, we just have to show that
%
\begin{equation}
\label{eq:anbn} (1-\varepsilon) \varepsilon\tau^2 \bigl\{\varepsilon
\bigl[-\phi_G'(a)\bigr] + (1-\varepsilon) \bigl[-
\phi_G'(b)\bigr] \bigr\} \geq\frac{2}{n\beta} +
\frac{\sqrt{k}}{n} \sqrt{2\log(4 /\alpha_n)}.
\end{equation}
Next, we want to prove that $ [ \frac{t_{\alpha
_n,k}}{2}-(1-\varepsilon)\tau, \frac{t_{\alpha_n,k}}{2}+\varepsilon
\tau ]$
remains included in a fixed interval $[c_1(M),c_2(M)]$ with $c_1(M)>0$.

On one hand, we have
\[
\frac{t_{\alpha_n,k}}{2} \geq \bar\Phi_G^{-1} \biggl(
\frac{k}{n} \biggr) \geq\bar\Phi_G^{-1} \bigl( 0.99
\bar\Phi_G(M) \bigr) %
\]
and
\[
\frac{t_{\alpha_n,k}}{2} -M \geq \bar\Phi_G^{-1} \bigl( 0.99
\bar \Phi_G(M) \bigr) -M :=c_1(M)>0.
\]
Moreover,
\begin{eqnarray*}
\bar\Phi_G \biggl(\frac{t_{\alpha_n,k}}{2} \biggr) &\geq&
\frac
{0.99}{2} \bar\Phi_G(M) - \sqrt{\frac{2 \log( 4 /\alpha_n)}{\sqrt{n}}}\sqrt{
0.99 \bar\Phi _G(M) }
\\
&\geq& \frac{\bar\Phi_G(M)}{200}
\end{eqnarray*}
since $(8.25){\log(4\log_2(n/2)/\alpha)}/{n} \leq\bar\Phi_G(M)$.
This implies that
\[
\frac{t_{\alpha_n,k}}{2}+\tau\leq \bar\Phi_G^{-1} \biggl(\frac{ \bar\Phi
_G(M)}{200} \biggr) +M :=c_2(M).
\]

Finally, the function $ -\phi_G' $ is bounded from below on this
interval by some positive constant
$C(M)= \min_{x\in[c_1(M),c_2(M)]}( -\phi_G'(x))$. This implies that
\eqref{eq:anbn} is satisfied if
$ \varepsilon(1-\varepsilon) \tau^2 \geq C(\alpha,\beta,M) {\sqrt
{\log\log(n)}}/{\sqrt{n}}$ for some suitable constant $C(\alpha,
\beta,M)$.
This concludes the proof of \eqref{E1}. The proof of \eqref{E2}
follows the same arguments.

\begin{Remark*} If we choose $k^* \in\K_n$ such that
\[
\frac{0.99}{2} \bar\Phi_G(M)\leq\frac{k^*}{n} \leq 0.99
\bar\Phi_G(M) %
\]
and consider the test statistics
\[
\mathds{1}_{X_{(n-k^*+1)} - X_{(k^*)} > q_{\alpha,k^*}}
\]
then it is easy to prove that \eqref{E1} and \eqref{E2} are satisfied
for $k=k^*$ if
$ \varepsilon(1-\varepsilon) \tau^2 \geq C'(\alpha,\beta,M)
/{\sqrt{n}}$ for some suitable constant $C'(\alpha, \beta,M)$ since
in this case
$\alpha_n$ is replaced by $\alpha$ and we do no more have the
logarithmic loss in the rate of convergence.
\end{Remark*}

\subsection{Proof of Proposition \texorpdfstring{\protect\ref{Prop:upperbound:dense}}{3.1}}
Following the definition of the threshold $v_{\alpha,n}$, it is easy
to see that $\psi_\alpha$ defined in \eqref{eq:test_variance} is a
level-$\alpha$ test. Now, our aim is to upper bound the term
\[
\P_f( \psi_\alpha= 0) = \P_f
\bigl(S_n^2 \leq v_{\alpha,n}\bigr)
\]
when $f\in\F_1[\rho,M]$ where, as previously,
\[
\F_{1}[\rho,M] = \bigl\{{f \in \F_{1}[M], \varepsilon(
1- \varepsilon) (\mu_2-\mu_1)^2 \geq\rho}
\bigr\} . %
\]
In a first time, a control of $v_{\alpha,n}$ is required. If a real
number $c_{\alpha,n}$ is determined such that\linebreak[4]  $\P
_{H_0}(S_n^2>c_{\alpha,n})\leq\alpha$, then $v_{\alpha,n} \leq
c_{\alpha,n}$.
According to \cite{Wilks}, page 200, if
$Y_1,\dots, Y_n$ are i.i.d. random variables such that $\mathbb
{E}[(Y_1-\mathbb{E}[Y_1])^4]<+\infty$, then
%
\begin{equation}\label{lemme-Wilks}
\Var \Biggl( \frac{1}{n-1} \sum_{i=1}^n
(Y_i - \bar Y_n)^2 \Biggr) \leq
\frac{1} n \biggl\{\E\bigl[\bigl(Y_1-\E[Y_1]
\bigr)^4\bigr] - \frac
{n-3}{n-1} \Var(Y_1)^2
\biggr\}.
\end{equation}
Hence, since $\E_\phi[X_1^4]<B$ and $ \E_\phi[ S_n^2] = \sigma^2$,
\[
\P_{H_0}\bigl(S_n^2 > c_{\alpha,n}\bigr) =
\P_{H_0}\bigl(S_n^2 -\sigma^2 >
c_{\alpha,n} -\sigma^2 \bigr) \leq \frac{\Var_{\phi
}(S_n^2)}{(c_{\alpha,n}-\sigma^2)^2} \leq
\frac{B}{n (c_{\alpha,n}-\sigma^2)^2}. %
\]
In particular $ \P_{H_0}(S_n^2 > c_{\alpha,n}) \leq \alpha $
with $c_{\alpha,n}= \sigma^2 + \sqrt{\frac{B}{n\alpha}}$, and thus
\[
v_{\alpha,n} \leq\sigma^2 + \sqrt{\frac{B}{n\alpha}}.
\]
Note that $\mathbb{E}_f[S_n^2] = \Var_f(X_1) = \sigma
^2+\varepsilon(1-\varepsilon)(\mu_2-\mu_1)^2$. Hence, for all $f\in
\F_1[\rho,M]$,
\begin{eqnarray*}
\P_{f}( \psi_\alpha= 0) & \leq& \P_{f} \biggl(
S_n^2 \leq\sigma^2 + \sqrt{
\frac{B}{n\alpha
}} \biggr)
\\
& = & \P_{f} \biggl(S_n^2 -
\mathbb{E}_f\bigl[S_n^2\bigr] \leq
\sigma^2 + \sqrt {\frac{B}{n\alpha}} - \mathbb{E}_f
\bigl[S_n^2\bigr] \biggr)
\\
& \leq& \P_{f} \biggl( \bigl|S_n^2 -
\mathbb{E}_f\bigl[S_n^2\bigr]\bigr| \geq
\varepsilon (1-\varepsilon) (\mu_2-\mu_1)^2 -
\sqrt{\frac{B}{n\alpha}} \biggr)
\\
& \leq& \frac{\Var_f(S_n^2)}{ [\varepsilon(1-\varepsilon)(\mu
_2-\mu_1)^2 - \sqrt{\afrac{B}{n\alpha}} ]^2}
\end{eqnarray*}
if $\varepsilon(1-\varepsilon)(\mu_2-\mu_1)^2 > \sqrt{\frac
{B}{n\alpha}}$. Using equation \eqref{lemme-Wilks}, we get
\[
\P_{f}( \psi_\alpha= 0) \leq\frac{\mathbb{E}_{f}[(X_1-\mathbb
{E}_f[X_1])^4]}{n [\varepsilon(1-\varepsilon)(\mu_2-\mu_1)^2 -
\sqrt{\afrac{B}{n\alpha}} ]^2} .
\]
In order to conclude, just remark that
\begin{eqnarray*}
\E_f\bigl[\bigl(X_1 - \E[X_1]
\bigr)^4\bigr] &=& (1-\varepsilon) \int_\R
\bigl[x-(1-\varepsilon) \mu_1 - \varepsilon\mu_2
\bigr]^4 \phi(x-\mu_1) \,\mathrm{d}x
\\
& &{} + \varepsilon\int_\R\bigl[x-(1-\varepsilon)
\mu_1 - \varepsilon\mu _2\bigr]^4 \phi(x-
\mu_2) \,\mathrm{d}x
\\
&=& (1-\varepsilon) \int_\R\bigl[y-\varepsilon(
\mu_2 -\mu_1)\bigr]^4 \phi (y) \,\mathrm{d}y
\\
& &{} + \varepsilon\int_\R\bigl[y+(1-\varepsilon) (
\mu_2 -\mu_1)\bigr]^4 \phi (y) \,\mathrm{d}y
\\
&=& \E_\phi\bigl[Z^4\bigr] + 6 \varepsilon(1-\varepsilon)
(\mu_2 -\mu_1)^2 \E_\phi
\bigl[Z^2\bigr]\\
&&{} + \bigl[\varepsilon(1-\varepsilon)^4 +
\varepsilon^4 (1-\varepsilon)\bigr](\mu_2 -
\mu_1)^4
\\
&\leq& B + \frac{6} 4 \sqrt{B} M^2 + M^4 \leq
\bigl(M^2 + \sqrt{B}\bigr)^2.
\end{eqnarray*}
%
Thus
\[
\P_f( \psi_\alpha= 0) \leq\frac{(M^2 + \sqrt{B})^2}{n
[\varepsilon(1-\varepsilon)(\mu_2-\mu_1)^2 - \sqrt{\afrac
{B}{n\alpha}} ]^2} \leq\beta
\]
as soon as
\[
\varepsilon(1-\varepsilon) (\mu_2-\mu_1)^2
\geq\frac{C(\alpha
,\beta,M,B)}{\sqrt{n}},
\]
for some positive constant $C(\alpha,\beta,M,B)$. This concludes the
proof of Proposition~\ref{Prop:upperbound:dense}.

\subsection{Proof of Theorem \texorpdfstring{\protect\ref{Prop:sparse:statord}}{4.1}}
\label{s:preuve_sparse_gaussien}
We will prove that, under the assumptions of Theorem~\ref
{Prop:sparse:statord}, $f \in\bar{\mathcal{F}}_1[n,\alpha,\beta]$
and the result will be a consequence of Theorem~\ref{Th:testStatOrd}.
We recall that
$|\K_n| \leq\log_2(n)$, hence $\alpha\geq\alpha_n \geq\alpha
/|\K_n| \geq
\alpha/\log_2(n)$. We set $\tau=\mu_2-\mu_1$ and we have to prove
that there exists $k\in\K_n$ and $c \in\mathbb{R}$ such that
%
\begin{eqnarray}
 \label{EE1}(1-\varepsilon) \bar\Phi_G ({t_{\alpha_n,k}} -c+ \varepsilon \tau
) + \varepsilon\bar\Phi_G \bigl({t_{\alpha_n,k}} - c-(1-
\varepsilon) \tau \bigr) &>& \rho(k,n),
\\
\label{EE2}(1-\varepsilon) \bar\Phi_G ( c - \varepsilon\tau ) + \varepsilon
\bar\Phi_G \bigl(c+ (1- \varepsilon) \tau \bigr) &>& \rho(k,n),
\end{eqnarray}
with $\rho(k,n)=\frac{k} n + \frac{1}{n\beta} + \frac{\sqrt{1 +
2k\beta}}{n\beta}$. Note that
$ \rho(k,n) \leq\frac{k} n + C_{\beta} \frac{\sqrt{k}}{n}$ with
$C_{\beta}= \frac{2}{\beta}+\sqrt{\frac{2}{\beta}}$.
We recall that $ t_{\alpha_n,k} $ is defined by
\[
\bar\Phi_G \biggl(\frac{t_{\alpha_n,k}}{2} \biggr) = \frac{k} n
\biggl[1-\sqrt{\frac{2\log( 4/\alpha_n)}{k}} \biggr].
\]
In the following, we set $C_{\alpha_n}=\sqrt{2\log( 4/\alpha_n)}$.
Since $\alpha_n \geq
\alpha/\log_2(n)$, note that $0<C_{\alpha_n} \leq C(\alpha) \sqrt
{\log\log(n)}$ for some constant $C(\alpha)$ depending
only on $\alpha$. We choose $k \in\K_n$ such that
%
\begin{equation}
\label{Condk} \lim_{n\rightarrow+\infty} \frac{k}{\log(n)\log\log(n)} =
 +\infty\quad \mbox{and}\quad  \lim_{n\rightarrow+\infty} \frac{n}{k} = +\infty
\end{equation}
and we define
%
\begin{equation}
\label{defc} c= \frac{t_{\alpha_n,k}}{2} - \sqrt{\frac{2} k}
C_{\alpha_n}.
\end{equation}
For the sake of simplicity, we omit the dependency with respect to $n$
in the notation of $k$ and $c$.
Let us first show that \eqref{EE2} holds for $n$ large enough. First,
note that
\[
(1-\varepsilon) \bar\Phi_G ( c - \varepsilon\tau ) + \varepsilon
\bar\Phi_G \bigl(c+ (1- \varepsilon) \tau \bigr) > (1-\varepsilon)
\bar\Phi_G (c) . %
\]
With the assumptions on $k$, we have that $c>0$ for $n$ large enough
since $t_{\alpha_n,k} \rightarrow +\infty$ and
$ C_{\alpha_n}/\sqrt{k} \rightarrow0$ as $n \rightarrow+\infty$.
Hence
\[
\bar\Phi_G (c) \geq \bar\Phi_G \biggl(
\frac{t_{\alpha
_n,k}}{2} \biggr) + \sqrt{\frac{2} k}C_{\alpha_n}
\phi_G \biggl(\frac
{t_{\alpha_n,k}}{2} \biggr) .
\]
%
Moreover, for all $u>0$,
\[
\bar\Phi_G (u) \leq\frac{1}{2}\exp\bigl(-u^2/2
\bigr)=\sqrt{\frac{\pi}{2}} \phi_G(u),
\]
hence
\[
\phi_G \biggl(\frac{t_{\alpha_n,k}}{2} \biggr)\geq\sqrt{
\frac
{2}{\pi}} \bar\Phi_G \biggl(\frac{t_{\alpha_n,k}}{2} \biggr).
\]
This leads to
\[
(1-\varepsilon) \bar\Phi_G (c) > (1-\varepsilon) \biggl({1 +
\frac
{2 C_{\alpha_n} }{\sqrt{\pi k}}} \biggr) \bar\Phi_G \biggl(\frac
{t_{\alpha_n,k}}{2}
\biggr) .
\]
After some obvious computations, condition \eqref{EE2} is satisfied as
soon as
\[
(1-\varepsilon) C_{\alpha_n} \biggl({\frac{2}{\sqrt{\pi}}-1} \biggr)
\frac{\sqrt{k}}{n}> \varepsilon\frac{k}{n} + C_{\beta}
\frac
{\sqrt{k}}{n} + \frac{2 C_{\alpha_n} ^2 }{\sqrt{\pi} n}.
\]
Since $\varepsilon<1/\sqrt{n}$ and $k\leq n$, we have $\varepsilon k
<\sqrt{k}$. We recall that $C_{\alpha_n} \rightarrow+\infty$ as
$n \rightarrow+\infty$ and with the assumptions on $k$, we have that
$ \sqrt{k} /C_{\alpha_n} \rightarrow+\infty$ as
$n \rightarrow+\infty$, and the above inequality holds for $n$ large enough.

It remains to prove that \eqref{EE1} is satisfied with the conditions
on $k$ imposed by \eqref{Condk} and the value of $c$ defined by \eqref{defc}.
Let $\Delta$ satisfy $0<r<\Delta\leq1$, we choose $k \in\K_n $
satisfying \eqref{Condk} and such that $n^{1-\Delta}\leq k \leq2
n^{1-\Delta}\log^2(n)$. Note that such values of $k$ exist for $n$
large enough.
It follows from Lemma~\ref{Lemma:MinBarPhi} that $ t_{\alpha_n,k}/2
\leq\sqrt{2 \Delta\log(n)}$.
First,
\begin{eqnarray*}
\bar\Phi_G ({t_{\alpha_n,k}} - c + \varepsilon\tau ) &=& \bar
\Phi_G \biggl(\frac{t_{\alpha_n,k}}{2} + \sqrt{\frac{2} k}
C_{\alpha_n} + \varepsilon\tau \biggr)
\\
&\geq& \bar\Phi_G \biggl(\frac{t_{\alpha_n,k}}{2} \biggr)- \biggl({\sqrt{
\frac{2} k} C_{\alpha_n} + \varepsilon\tau} \biggr) \phi
_G \biggl(\frac{t_{\alpha_n,k}}{2} \biggr)
\\
&\geq& \frac{k} n \biggl[1-\frac{C_{\alpha_n}}{\sqrt{k}} \biggr] - \biggl({\sqrt{
\frac{2} k} C_{\alpha_n} + \varepsilon\tau} \biggr)
\phi_G \biggl(\frac{t_{\alpha_n,k}}{2} \biggr).
\end{eqnarray*}
We have to give an upper bound for $ \phi_G (\frac{t_{\alpha
_n,k}}{2} )$. We use the inequality
\[
\forall u >0 ,\qquad  \bar\Phi_G(u) \geq \biggl({\frac{1}{u}-
\frac
{1}{u^3} } \biggr) \phi_G(u),
\]
this leads to
\[
\forall u >0 , \qquad \phi_G(u) \leq\frac{u^3}{u^2-1} \bar
\Phi_G(u) \leq u^3 \bar\Phi_G(u) ,
\]
provided that $u^2-1 \geq1$. This is the case, for $n$ large enough
for $ u= t_{\alpha_n,k}/2 $, hence we have
\begin{eqnarray*}
\phi_G \biggl(\frac{t_{\alpha_n,k}}{2} \biggr) &\leq& \biggl[
\frac
{t_{\alpha_n,k}}{2} \biggr]^3 \bar\Phi_G \biggl(
\frac{t_{\alpha
_n,k}}{2} \biggr)
\\
&\leq& \bigl[\sqrt{2 \Delta\log(n)} \bigr]^3 \frac{k} n
\\
&\leq& 4 \sqrt{2} \bigl[\log(n) \bigr]^{7/2} n^{-\Delta}.
\end{eqnarray*}
Finally, we obtain that
\[
\bar\Phi_G ({t_{\alpha_n,k}} - c + \varepsilon\tau ) \geq
\frac{k} n - C_{\alpha_n}\frac{\sqrt{k}}{n} - \biggl({
\frac
{\sqrt{2} C_{\alpha_n}}{\sqrt{k}} + \varepsilon\tau} \biggr) 4 \sqrt{2} \bigl[\log(n)
\bigr]^{7/2} n^{-\Delta}. %
\]
Second, we want to lower bound $\bar\Phi_G ({t_{\alpha_n,k}} -
c-(1- \varepsilon) \tau )$. We have that
\begin{eqnarray*}
\bar\Phi_G \bigl({t_{\alpha_n,k}} - c-(1- \varepsilon) \tau
\bigr) &=&\bar\Phi_G \biggl(\frac{t_{\alpha_n,k}}{2}+ \sqrt{
\frac{2} k} C_{\alpha_n}-(1- \varepsilon) \tau \biggr)
\\
&\geq& \bar\Phi_G \biggl( \sqrt{2 \Delta\log(n)} -\tau+ \sqrt {
\frac{2}{k}} C_{\alpha_n} + \varepsilon\tau \biggr)
\\
&\geq& \bar\Phi_G \bigl({\sqrt{2 \Delta\log(n)}- \sqrt{2 r \log
(n)} } \bigr)
\\
& &{} - \biggl({\varepsilon\tau+ \frac{\sqrt{2} C_{\alpha_n}}{\sqrt
{k}}} \biggr) \phi_G
\bigl(\sqrt{2 \Delta\log(n)}- \sqrt{2 r \log(n)} \bigr)
\end{eqnarray*}
since $\tau=\sqrt{2 r \log(n)}$. Moreover, since
$ \phi_G(\sqrt{2 \Delta\log(n)}- \sqrt{2 r \log(n)} ) = (\sqrt
{2\pi})^{-1} n^{-(\sqrt{\Delta}-\sqrt{r})^2}$, and using again the
inequality
$\bar\Phi_G (u) \geq(\frac{1}{u}- \frac{1}{u^3}) \phi_G(u) $
which holds for all $u>0$, we obtain that
\[
\bar\Phi_G \bigl({t_{\alpha_n,k}} - c-(1- \varepsilon) \tau
\bigr) \geq C n^{-(\sqrt{\Delta}-\sqrt{r})^2} \biggl({\frac{1}{\sqrt
{\log(n)}}- \varepsilon\tau-
\frac{\sqrt{2} C_{\alpha_n}}{\sqrt
{k}}} \biggr),
\]
for some positive constant $C$ depending on $\Delta$ and $r$.
Condition \eqref{EE1} is thus fulfilled if
\begin{eqnarray*}
&&C\varepsilon n^{-(\sqrt{\Delta}-\sqrt{r})^2} \biggl({\frac
{1}{\sqrt{\log(n)}} -\varepsilon\tau-
\sqrt{\frac{2} k} C_{\alpha
_n} } \biggr)\\
&&\qquad > \varepsilon
\frac{k}{n}+ (C_{\alpha_n}+C_{\beta}) \frac{\sqrt{k}}{n} +
\biggl({\sqrt{\frac{2} k} C_{\alpha_n} + \varepsilon\tau} \biggr) 4
\sqrt{2} \bigl[\log(n) \bigr]^{7/2} n^{-\Delta}. %
\end{eqnarray*}
By \eqref{Condk}, $ C_{\alpha_n}/\sqrt{k} =\mathrm{o}(1/\sqrt{\log(n)})$,
and the left-hand side of this inequality is equivalent as $n
\rightarrow+\infty$ to $ C \varepsilon n^{-(\sqrt{\Delta}-\sqrt
{r})^2}/ \sqrt{\log(n)}$ and the right-hand side is equivalent as $n
\rightarrow+\infty$ to
$ 8 C_{\alpha_n}  ({\log(n)} )^{7/2} n^{-\Delta}/\sqrt
{k} $. Hence, the condition \eqref{EE1} will be satisfied
asymptotically if for some $\Delta\in\,]\,0,1]$,
\[
\delta+ (\sqrt{\Delta}-\sqrt{r})^2 < \frac{1+\Delta}{2}.
\]
\begin{itemize}
\item If $\frac{1}{2} <\delta\leq\frac{3}{4}$ and $ 0<r\leq\frac
{1}{4}$, we set $\Delta=4r$ and the above condition becomes $r>\delta
-\frac{1}{2}$.
\item If $\frac{1}{2} <\delta\leq\frac{3}{4}$ and $ r > \frac
{1}{4}$, the above condition is satisfied with $\Delta=1$ and no
additional condition is required.
\item If $\delta> \frac{3}{4}$, we set $\Delta=1$ and the above
condition becomes $r>(1-\sqrt{1-\delta})^2$.
\end{itemize}

This concludes the proof of Theorem~\ref{Prop:sparse:statord}.

\subsection{Proof of Theorem \texorpdfstring{\protect\ref{Prop:sparse:Laplace}}{4.2}}
\label{s:preuve_sparse_laplace}

We first provide an upper bound for the quantile $q_{\alpha_n,k}$ for
all $k\in\lbrace1,\dots, n/2 \rbrace$. We have seen in the proof of
Theorem~\ref{Th:testStatOrd} that
\[
q_{\alpha_n,k} \leq t_{\alpha_n,k},
\]
where
%
\begin{equation}\label
{eq:control_t0}
\bar\Phi_L \biggl( \frac{t_{\alpha_n,k}}{2} \biggr) = \frac{k}{n}
\biggl( 1 - \sqrt{ \frac{2\log(4/\alpha_n)}{k} } \biggr).
\end{equation}
This leads to
\[
\frac{1}{2} \mathrm{e}^{-\sfrac{t_{\alpha_n,k}}{2}} = \frac{k}{n} \biggl( 1 - \sqrt{
\frac{2\log(4/\alpha_n)}{k} } \biggr).
\]
Hence,\vspace*{2pt}
%
\begin{equation}\label{eq:control_t}
\frac{t_{\alpha_n,k}}{2} = \log \biggl( \frac{n}{k} \biggr) - \log \biggl( 1 -
\sqrt{ \frac{2\log(4/\alpha_n)}{k} } \biggr) - \log(2).
\end{equation}
Then, applying Theorem~\ref{Th:testStatOrd} with $c=t_{\alpha
_n,k}/2$, we get that if, for some $k\in\K_n $,\vspace*{2pt}
%
\begin{eqnarray}\label{eq:condition1}
&&(1-\varepsilon) \bar\Phi_L \biggl( \frac{t_{\alpha_n,k}}{2} +
\varepsilon(\mu_2-\mu_1) \biggr) + \varepsilon\bar
\Phi_L \biggl( \frac{t_{\alpha_n,k}}{2} - (1-\varepsilon) (
\mu_2-\mu_1) \biggr)\nonumber \\[-7pt]\\[-7pt]
&&\quad > \frac{k}{n} +
\frac{1+\sqrt{1+2k\beta}}{n\beta} \nonumber
\end{eqnarray}
and\vspace*{2pt}
\begin{eqnarray*}
&&(1-\varepsilon) \bar\Phi_L \biggl(\frac{t_{\alpha_n,k}}{2} - \varepsilon(
\mu_2-\mu_1) \biggr) + \varepsilon\bar\Phi_L
\biggl(\frac{t_{\alpha_n,k}}{2} + (1- \varepsilon) (\mu_2-
\mu_1) \biggr)\\[1.5pt]
&&\quad  > \frac{k}{n} + \frac{1+\sqrt{1+2k\beta}}{n\beta},
\end{eqnarray*}
then our test is powerful. For the sake of convenience, we will
concentrate our attention to the first inequality, the control of the
second one following essentially the same lines.

From now on, we will only deal with possible values of $k$ satisfying\vspace*{2pt}
%
\begin{equation}
\frac{t_{\alpha_n,k}}{2} > \mu_2 - \mu_1. \label{eq:condition_tau_t}
\end{equation}
Using the properties of the Laplace distribution and the equation
\eqref{eq:condition_tau_t}, the condition \eqref{eq:condition1} becomes\vspace*{1.5pt}
\begin{eqnarray*}
& & (1-\varepsilon)\times\frac{1}{2} \mathrm{e}^{ -\sklfrac{t_{\alpha_n,k}}{2}
- \varepsilon(\mu_2-\mu_1)} + \varepsilon\times
\frac{1}{2} \mathrm{e}^{-
\sklfrac{t_{\alpha_n,k}}{2} + (1-\varepsilon)(\mu_2-\mu_1) } > \frac
{k}{n}+ \frac{1+\sqrt{1+2k\beta}}{n\beta}
\\[1.5pt]
&&\quad  \Leftrightarrow\quad  \varepsilon\times\frac{1}{2} \mathrm{e}^{- \sfrac
{t_{\alpha_n,k}}{2} + (1-\varepsilon)(\mu_2-\mu_1) } >
\frac
{k}{n}+ \frac{1+\sqrt{1+2k\beta}}{n\beta} \\[1.5pt]
&&\hphantom{\quad  \Leftrightarrow\quad  \varepsilon\times\frac{1}{2} \mathrm{e}^{- \sfrac
{t_{\alpha_n,k}}{2} + (1-\varepsilon)(\mu_2-\mu_1) } >}{}- (1-\varepsilon)\times \frac{1}{2}
\mathrm{e}^{ -\sklfrac{t_{\alpha_n,k}}{2} - \varepsilon(\mu_2-\mu
_1)}
\\[1.5pt]
&&\quad  \Leftrightarrow\quad  \varepsilon\times\frac{1}{2} \mathrm{e}^{- \sklfrac
{t_{\alpha_n,k}}{2}+ (1-\varepsilon)(\mu_2-\mu_1) } >
\frac{k}{n}+ \frac{1+\sqrt{1+2k\beta}}{n\beta} \\[1.5pt]
&&\hphantom{\quad  \Leftrightarrow\quad  \varepsilon\times\frac{1}{2} \mathrm{e}^{- \sklfrac
{t_{\alpha_n,k}}{2}+ (1-\varepsilon)(\mu_2-\mu_1) } >}{}- (1-\varepsilon) \phi_L
\biggl( -\frac{t_{\alpha_n,k}}{2} \biggr)\times \mathrm{e}^{ - \varepsilon(\mu_2-\mu_1)}.
\end{eqnarray*}
Since $\phi_L(x)=\bar\Phi_L(x)$ for all $x\geq0 $ and thanks to
\eqref{eq:control_t0}, we get that
\begin{eqnarray*}
& & (1-\varepsilon)\times\frac{1}{2} \mathrm{e}^{ -\sklfrac{t_{\alpha_n,k}}{2}
- \varepsilon(\mu_2-\mu_1)} + \varepsilon\times
\frac{1}{2} \mathrm{e}^{-
\sklfrac{t_{\alpha_n,k}}{2} + (1-\varepsilon)(\mu_2-\mu_1) } > \frac
{k}{n}+ \frac{1+\sqrt{1+2k\beta}}{n\beta}
\\
& &\quad \Leftrightarrow\quad  \varepsilon\times\frac{1}{2} \mathrm{e}^{- \sklfrac
{t_{\alpha_n,k}}{2}+ (1-\varepsilon)(\mu_2-\mu_1) } >
\frac{k}{n}+ \frac{1+\sqrt{1+2k\beta}}{n\beta}- (1-\varepsilon) \frac{k} n \biggl[1-\sqrt{\frac{2\log(\sfrac
{4}{\alpha_n})}{k}}
\biggr]
\\
& &\hphantom{\quad \Leftrightarrow\quad  \varepsilon\times\frac{1}{2} \mathrm{e}^{- \sklfrac
{t_{\alpha_n,k}}{2}+ (1-\varepsilon)(\mu_2-\mu_1) } >
\frac{k}{n}+ \frac{1+\sqrt{1+2k\beta}}{n\beta}-}{}  \times \bigl( 1 - \varepsilon(\mu_2-\mu_1) +
V_n \bigr),
\end{eqnarray*}
where $V_n \leq C\varepsilon^2(\mu_2-\mu_1)^2 $ for some $C>0$. As
in the proof of Theorem~\ref{Th:testStatOrd}, we will deal with values
of $k$ having the parametrization $k/n = n^{-\Delta}$ for some $\Delta
\in\,]\,0,1[$. In particular,
\[
\sqrt{k} = n^{\vfrac{1-\Delta}{2}} \quad \mbox{and} \quad \frac{\sqrt{k}}{n} = n^{-\vfrac{1+\Delta}{2}}.
\]
A short investigation of the asymptotics of the term in the right-hand
side of the previous inequality indicates that the dominating term is
of order $\sqrt{k}/n$. Indeed, thanks to the parametrization of $k$,
$\varepsilon$ an $\mu_2-\mu_1$, we get that
\[
\varepsilon\frac{k}{n}(\mu_2-\mu_1) = \mathrm{o} \biggl(
\frac{\sqrt{k}}{n} \biggr)\quad  \mbox{and}\quad  \frac{1}{n} =\mathrm{o} \biggl(
\frac{\sqrt{k}}{n} \biggr) \quad \mbox{as } n\rightarrow+\infty.
\]
Hence, in order to guarantee that our test is powerful, we have to
ensure that
%
\begin{eqnarray}\label{eq:cond_laplace_sparse}
& & \varepsilon\times\frac{1}{2} \mathrm{e}^{- \sfrac{t_{\alpha_n,k}}{2}+
(1-\varepsilon)(\mu_2-\mu_1) } > C(\alpha,\beta)
\frac{\sqrt
{k}}{n}
\nonumber
\\[-8pt]\\[-8pt]
&&\quad  \Leftrightarrow \quad \varepsilon\times\frac{1}{2} \mathrm{e}^{- \sfrac
{t_{\alpha_n,k}}{2}+(\mu_2-\mu_1)}
\bigl(1-\mathrm{o}(1)\bigr) > C(\alpha,\beta)\frac
{\sqrt{k}}{n}, \nonumber
\end{eqnarray}
for some positive constant $C(\alpha,\beta)$, as $n\rightarrow
+\infty$. Thanks to \eqref{eq:control_t}, the inequality \eqref
{eq:cond_laplace_sparse} becomes\vspace*{-1pt}
\begin{eqnarray*}
\frac{1}{n^\delta}\times\frac{1}{n^\Delta}\times n^r >
n^{-\vfrac
{1+\Delta}{2}} & \quad \Leftrightarrow\quad & \delta+\Delta-r < \frac{1+\Delta}{2}
\\
& \quad \Leftrightarrow\quad & r > \delta+ \frac{\Delta}{2} - \frac{1}{2}.
\end{eqnarray*}
In practice, the smallest possible parameter $\Delta$ will provide the
less restrictive separation condition. In the same time, we have to
ensure that the condition \eqref{eq:condition_tau_t} is satisfied. It
follows from \eqref{eq:control_t} that
$t_{\alpha_n,k}/{2}\sim\Delta\log(n)$ as $n \rightarrow\infty$,
and \eqref{eq:condition_tau_t} holds for $n$ large enough as soon as
$\Delta> r$. Hence, choosing $\Delta= r+ r_0$ for some positive
$r_0$, we can remark that\vspace*{-1pt}
\[
r > \delta+ \frac{\Delta}{2} - \frac{1}{2}\quad  \Leftrightarrow\quad  r> 2(
\delta-1/2) + r_0,
\]
which is satisfied as soon as\vspace*{-1pt}
\[
r> 2(\delta-1/2),
\]
provided $r_0$ is small enough. This concludes the proof.

\begin{appendix}
\section*{Appendix: Lemmas for the upper-bound}\label{app}
\setcounter{equation}{0}%

\begin{lem}\label{Append:Lemma:quant}
Let $Y_1,\ldots,Y_n$ be $n$ random variables with a cumulative
distribution function $F$ and the order statistics are denoted
$Y_{(1)}\leq Y_{(2)}\leq\cdots\leq Y_{(n)}$. Let $\alpha\in\,]\,0,1[$ and
let $k\in\{1,\ldots,n\}$ such that $k > 2 \log (\frac
{2}{\alpha} )$. Let $c$ and $d$ be
two real numbers such that
%
\begin{equation}\label{Append:Defcd}
F(d) \vee\bigl(1-F(c)\bigr) \leq\frac{k}{n} \biggl[1-\sqrt{
\frac{2\log(\sfrac
{2}{\alpha})}{k}} \biggr].
\end{equation}
Then $\P(Y_{(n-k+1)}\geq c)\leq\alpha$ and $\P(Y_{(k)}\leq d)\leq
\alpha$.
\end{lem}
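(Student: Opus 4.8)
The plan is to reduce each of the two assertions to an upper tail bound for a binomial random variable, and then apply a Chernoff bound in which the exponential parameter is chosen so as to reproduce exactly the threshold $\frac{k}{n}\big[1-\sqrt{2\log(2/\alpha)/k}\big]$ that appears in \eqref{Append:Defcd}.

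First I would treat the bound for $Y_{(k)}$. The key observation is the identity $\{Y_{(k)}\le d\}=\{N\ge k\}$, where $N=\sum_{i=1}^n \mathds{1}_{Y_i\le d}$ is binomial with parameters $n$ and $p=F(d)$. Write $a=\sqrt{2\log(2/\alpha)/k}$; the hypothesis $k>2\log(2/\alpha)$ guarantees $a\in(0,1)$, and \eqref{Append:Defcd} says precisely that $np=nF(d)\le k(1-a)$. Using Markov's inequality for $e^{\theta N}$ with $\theta>0$, the binomial moment generating function, and $1-p+pe^\theta\le e^{p(e^\theta-1)}$, one gets
$$\P(Y_{(k)}\le d)=\P(N\ge k)\le e^{-\theta k}\big(1-p+pe^\theta\big)^n\le \exp\!\big(-\theta k+np(e^\theta-1)\big)\le \exp\!\big(-\theta k+k(1-a)(e^\theta-1)\big),$$
the last step being legitimate because $e^\theta-1>0$ and $np\le k(1-a)$. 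Choosing $\theta=-\log(1-a)>0$ collapses the bracket to $k\big(a+\log(1-a)\big)$, and since $\log(1-a)\le -a-a^2/2$ on $[0,1)$, this exponent is at most $-ka^2/2=-\log(2/\alpha)$; hence $\P(Y_{(k)}\le d)\le \alpha/2\le\alpha$.

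For the bound on $Y_{(n-k+1)}$ I would run the identical computation with $M=\sum_{i=1}^n\mathds{1}_{Y_i\ge c}$, which is binomial with parameters $n$ and $q=\P(Y_1\ge c)=1-F(c)$ (this uses continuity of $F$, which is harmless since the lemma is only applied to absolutely continuous $F$; in general one replaces ``$\ge c$'' by ``$>c$''). Indeed $\{Y_{(n-k+1)}\ge c\}=\{M\ge k\}$ and \eqref{Append:Defcd} gives $nq\le k(1-a)$, so the displayed chain with $(M,q)$ in place of $(N,p)$ yields $\P(Y_{(n-k+1)}\ge c)\le \alpha/2\le\alpha$. Alternatively this second case follows from the first applied to the reflected sample $-Y_1,\dots,-Y_n$.

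I do not anticipate any genuine difficulty: the only step that requires care is the choice $\theta=-\log(1-a)$ together with the inequality $\log(1-a)\le -a-a^2/2$, which is exactly what converts the factor $1-\sqrt{2\log(2/\alpha)/k}$ in the hypothesis into the exponent $\log(2/\alpha)$ in the conclusion; everything else is the routine reduction of an order-statistic event to a binomial tail.
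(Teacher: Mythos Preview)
Your proof is correct. Both you and the paper reduce the order-statistic events to upper-tail probabilities for a binomial count, and both finish with a concentration inequality; the difference lies only in which inequality is invoked. The paper applies a Bernstein inequality: after centering the sum $\sum_i\mathds{1}_{Y_i\ge c}$ it bounds the variance by $n[1-F(c)]\le k$ and arrives at $\P(Y_{(n-k+1)}\ge c)\le 2\exp\!\big(-\tfrac{(k-n[1-F(c)])^2}{2k}\big)\le\alpha$. Your route---the raw Chernoff bound $\P(N\ge k)\le e^{-\theta k}(1-p+pe^\theta)^n$ combined with $1+x\le e^x$ and the explicit choice $\theta=-\log(1-a)$---is a touch more elementary, since it avoids quoting Bernstein and uses only the elementary inequality $\log(1-a)\le -a-a^2/2$; as a small bonus it yields $\alpha/2$ rather than $\alpha$. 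Your remark about continuity of $F$ (so that $\P(Y_1\ge c)=1-F(c)$) is appropriate: the paper implicitly makes the same identification, which is harmless because the lemma is only invoked for absolutely continuous laws.
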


\begin{pf}
\begin{eqnarray*}
\P(Y_{(n-k+1)}\geq c) &=& \P \Biggl(\sum_{i=1}^n
\mathds{1}_{\{
Y_i\geq c\}}\geq k \Biggr)
\\
&=& \P \Biggl(\sum_{i=1}^n \bigl\{
\mathds{1}_{\{Y_i\geq c\}} - \bigl[1-F(c)\bigr]\bigr\} \geq k- n \bigl[1- F(c)
\bigr] \Biggr).
\end{eqnarray*}
According to condition \eqref{Append:Defcd},
\[
k- n \bigl[1-F(c)\bigr] \geq k \sqrt{\frac{2\log(\sfrac{2} {\alpha})}{k}} >0. %
\]
Using a Bernstein's inequality, we get
\[
\P(Y_{(n-k+1)}\geq c) \leq2\exp \biggl[-\frac{1} 2
\frac
{(k-n[1-F(c)])^2}{v + \sklfrac{1} {3} (k-n[1-F(c)])} \biggr] %
\]
with $v=\sum_{i=1}^n \E[(\mathds{1}_{\{Y_i\geq c\}} -
[1-F(c)])^2]=\sum_{i=1}^n \Var(\mathds{1}_{Y_i\geq
c})=nF(c)[1-F(c)]\leq n[1-F(c)]$. Thus,
$3v + k-n[1-F(c)]\leq2n[1-F(c)]+k\leq3 k - 2 k \sqrt{\frac{2\log
(\sfrac{2}{\alpha})}{k}}\leq3 k$. This implies that
\[
\P(Y_{(n-k+1)}\geq c)\leq2\exp \biggl[-\frac{3} 2
\frac
{(k-n[1-F(c)])^2}{3 k} \biggr]\leq2\exp \biggl[-\log \biggl(\frac{2}{\alpha}
\biggr) \biggr]=\alpha. %
\]
In the same way,
\begin{eqnarray*}
\P(Y_{(k)}\leq d) &=& \P \Biggl(\sum_{i=1}^n
\mathds{1}_{\{Y_i\geq
d\}}\leq n-k \Biggr)
\\
&=& \P \Biggl(\sum_{i=1}^n \bigl\{
\mathds{1}_{\{Y_i\geq d\}} - \bigl[1-F(d)\bigr]\bigr\} \leq n F(d)-k \Biggr).
\end{eqnarray*}
Since $n F(d)-k <0$ according to condition \eqref{Append:Defcd}, a
Bernstein's inequality implies that
\begin{eqnarray*}
\P(Y_{(k)}\leq d) &\leq&\P \Biggl(\Biggl\llvert \sum
_{i=1}^n \bigl\{\mathds{1}_{\{
Y_i\geq d\}} -
\bigl[1-F(d)\bigr]\bigr\}\Biggr\rrvert \geq k-n F(d) \Biggr)\\
&\leq&2\exp \biggl[-
\frac{1} 2 \frac{[nF(d)-k]^2}{v + \sklfrac{1} {3} [k-nF(d)]} \biggr] %
\end{eqnarray*}
with $v=\sum_{i=1}^n \E[(\mathds{1}_{\{Y_i\geq d\}} -
[1-F(d)])^2]=\sum_{i=1}^n \Var(Y_i\geq d)=nF(d)[1-F(d)]\leq n F(d)$. Thus,
$3v + k- nF(d)\leq2n F(d) + k \leq3k - 2 k \sqrt{\frac{2\log(\sfrac
{2}{\alpha})}{k}}\leq3 k$. This implies that
\[
\P(Y_{(k)}\leq d)\leq2\exp \biggl[-\frac{3} 2
\frac{[nF(d)-k]^2}{3
k} \biggr]\leq2\exp \biggl[-\log \biggl(\frac{2} \alpha
\biggr) \biggr]=\alpha. %
\]
\end{pf}

%
\begin{lem}\label{Lemma:MinBarPhi}
If $k\geq8 \log ({4}/{\alpha_n} )$ and $\frac{k} n \geq
n^{-\Delta}$ with $\Delta\in\,]\,0,1[$, then
\[
t_{\alpha_n,k} \leq2 \sqrt{2 \Delta\log(n)}.
\]
\end{lem}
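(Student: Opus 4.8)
The plan is to exploit the monotonicity of the Gaussian survival function $\bar\Phi_G$ together with the elementary tail bound $\bar\Phi_G(u)\le \tfrac12 e^{-u^2/2}$ for $u>0$ (already used elsewhere in the paper). First I would record that the hypothesis $k\ge 8\log(4/\alpha_n)$ implies in particular $k>2\log(4/\alpha_n)$, so that $t_{\alpha_n,k}$ is finite and is genuinely defined by the relation
$$\bar\Phi_G\!\left(\frac{t_{\alpha_n,k}}{2}\right)=\frac{k}{n}\left[1-\sqrt{\frac{2\log(4/\alpha_n)}{k}}\,\right].$$
Since $\bar\Phi_G$ is strictly decreasing, establishing $t_{\alpha_n,k}\le 2\sqrt{2\Delta\log(n)}$ is equivalent to showing $\bar\Phi_G\!\big(\sqrt{2\Delta\log(n)}\big)\le \frac{k}{n}\big[1-\sqrt{2\log(4/\alpha_n)/k}\big]$.

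The right-hand side is bounded below using $k\ge 8\log(4/\alpha_n)$, which gives $\sqrt{2\log(4/\alpha_n)/k}\le \sqrt{1/4}=1/2$, hence $\frac{k}{n}\big[1-\sqrt{2\log(4/\alpha_n)/k}\big]\ge \frac{k}{2n}\ge \frac12 n^{-\Delta}$ by the assumption $k/n\ge n^{-\Delta}$. The left-hand side is bounded above by the Gaussian tail inequality: $\bar\Phi_G\!\big(\sqrt{2\Delta\log(n)}\big)\le \tfrac12\exp(-\Delta\log(n))=\tfrac12 n^{-\Delta}$. Chaining these two estimates yields $\bar\Phi_G\!\big(\sqrt{2\Delta\log(n)}\big)\le \tfrac12 n^{-\Delta}\le \bar\Phi_G(t_{\alpha_n,k}/2)$, and applying the (strict) monotonicity of $\bar\Phi_G$ one more time gives $\sqrt{2\Delta\log(n)}\ge t_{\alpha_n,k}/2$, which is the claim.

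There is no genuinely hard step: the argument is a two-line comparison. The only points deserving a moment of care are that $t_{\alpha_n,k}$ is finite (ensured by $k\ge 8\log(4/\alpha_n)>2\log(4/\alpha_n)$), so that the monotonicity comparison is legitimate, and that the constant $8$ in the hypothesis is exactly what makes the factor $1-\sqrt{2\log(4/\alpha_n)/k}$ at least $1/2$, matching the $\tfrac12$ in the tail bound.
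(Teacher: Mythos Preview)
Your proof is correct and uses exactly the same ingredients as the paper: the Gaussian tail bound $\bar\Phi_G(u)\le\tfrac12 e^{-u^2/2}$, the fact that $k\ge 8\log(4/\alpha_n)$ forces $1-\sqrt{2\log(4/\alpha_n)/k}\ge 1/2$, and the hypothesis $k/n\ge n^{-\Delta}$. The only cosmetic difference is that the paper applies the tail bound at $u=t_{\alpha_n,k}/2$ and then inverts to solve for $t_{\alpha_n,k}$, whereas you apply it at $u=\sqrt{2\Delta\log n}$ and compare via monotonicity; these are two rearrangements of the same chain of inequalities.
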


\begin{pf}
\begin{eqnarray*}
\bar\Phi_G \biggl(\frac{t_{\alpha_n,k}}{2} \biggr) &=& \frac{k}{n}
\biggl[1 - \sqrt{\frac{2 \log(4/\alpha_n)}{k}} \biggr]
\\
&\leq& \frac{1}{2} \exp \biggl[- \frac{1} 2 \biggl(
\frac{t_{\alpha
_n,k}}{2} \biggr)^2 \biggr],
\end{eqnarray*}
thus
\[
\exp \biggl[\frac{1} 2 \biggl(\frac{t_{\alpha_n,k}}{2} \biggr)^2
\biggr] \leq\frac{1} 2 \biggl[1 - \sqrt{\frac{2 \log(4/\alpha
_n)}{k}}
\biggr]^{-1} n^{\Delta}.
\]
If $ k\geq8 \log ({4}/{\alpha_n } )$, then
\[
2 \biggl[1 - \sqrt{\frac{2 \log(4/\alpha_n)}{k}} \biggr] \geq1 %
\]
which leads to $t_{\alpha_n,k} \leq2 \sqrt{2\Delta\log(n)}$.
\end{pf}
\end{appendix}

\section*{Acknowledgements}
The authors would like to thank the associate editor and the two
referees for their constructive remarks that have helped to improve the paper.
They also acknowledge the support of the French Agence Nationale de la
Recherche (ANR), under grant MixStatSeq (ANR-13-JS01-0001-01).



\printhistory

\begin{thebibliography}{20}


\bibitem{Gabor}
\begin{barticle}[mr]
\bauthor{\bsnm{Addario-Berry},~\bfnm{Louigi}\binits{L.}},
\bauthor{\bsnm{Broutin},~\bfnm{Nicolas}\binits{N.}},
\bauthor{\bsnm{Devroye},~\bfnm{Luc}\binits{L.}} \AND
\bauthor{\bsnm{Lugosi},~\bfnm{G{\'a}bor}\binits{G.}}
(\byear{2010}).
\btitle{On combinatorial testing problems}.
\bjournal{Ann. Statist.}
\bvolume{38}
\bpages{3063--3092}.
\bid{doi={10.1214/10-AOS817}, issn={0090-5364}, mr={2722464}}
\end{barticle}
\bptok{imsref}%
\endbibitem

\bibitem{Azais_Gassiat_Mercadier}
\begin{barticle}[mr]
\bauthor{\bsnm{Aza{\"{\i}}s},~\bfnm{Jean-Marc}\binits{J.-M.}},
\bauthor{\bsnm{Gassiat},~\bfnm{{\'E}lisabeth}\binits{{\'E}.}} \AND
\bauthor{\bsnm{Mercadier},~\bfnm{C{\'e}cile}\binits{C.}}
(\byear{2009}).
\btitle{The likelihood ratio test for general mixture models with or without structural parameter}.
\bjournal{ESAIM Probab. Stat.}
\bvolume{13}
\bpages{301--327}.
\bid{doi={10.1051/ps:2008010}, issn={1292-8100}, mr={2528086}}
\end{barticle}
\bptok{imsref}%
\endbibitem

\bibitem{Yannick}
\begin{barticle}[mr]
\bauthor{\bsnm{Baraud},~\bfnm{Yannick}\binits{Y.}}
(\byear{2002}).
\btitle{Non-asymptotic minimax rates of testing in signal detection}.
\bjournal{Bernoulli}
\bvolume{8}
\bpages{577--606}.
\bid{issn={1350-7265}, mr={1935648}}
\end{barticle}
\bptok{imsref}%
\endbibitem

\bibitem{Cai_Jeng_Low}
\begin{barticle}[mr]
\bauthor{\bsnm{Cai},~\bfnm{T.~Tony}\binits{T.T.}},
\bauthor{\bsnm{Jeng},~\bfnm{X.~Jessie}\binits{X.J.}} \AND
\bauthor{\bsnm{Jin},~\bfnm{Jiashun}\binits{J.}}
(\byear{2011}).
\btitle{Optimal detection of heterogeneous and heteroscedastic mixtures}.
\bjournal{J. R. Stat. Soc. Ser. B Stat. Methodol.}
\bvolume{73}
\bpages{629--662}.
\bid{doi={10.1111/j.1467-9868.2011.00778.x}, issn={1369-7412}, mr={2867452}}
\end{barticle}
\bptok{imsref}%
\endbibitem

\bibitem{Cai_Jin_Low}
\begin{barticle}[mr]
\bauthor{\bsnm{Cai},~\bfnm{T.~Tony}\binits{T.T.}},
\bauthor{\bsnm{Jin},~\bfnm{Jiashun}\binits{J.}} \AND
\bauthor{\bsnm{Low},~\bfnm{Mark~G.}\binits{M.G.}}
(\byear{2007}).
\btitle{Estimation and confidence sets for sparse normal mixtures}.
\bjournal{Ann. Statist.}
\bvolume{35}
\bpages{2421--2449}.
\bid{doi={10.1214/009053607000000334}, issn={0090-5364}, mr={2382653}}
\end{barticle}
\bptok{imsref}%
\endbibitem

\bibitem{Cai_Wu}
\begin{barticle}[mr]
\bauthor{\bsnm{Cai},~\bfnm{Tony~T.}\binits{T.T.}} \AND
\bauthor{\bsnm{Wu},~\bfnm{Yihong}\binits{Y.}}
(\byear{2014}).
\btitle{Optimal detection of sparse mixtures against a given null distribution}.
\bjournal{IEEE Trans. Inform. Theory}
\bvolume{60}
\bpages{2217--2232}.
\bid{doi={10.1109/TIT.2014.2304295}, issn={0018-9448}, mr={3181520}}
\end{barticle}
\bptok{imsref}%
\endbibitem

\bibitem{Charnigo_Sun}
\begin{barticle}[mr]
\bauthor{\bsnm{Charnigo},~\bfnm{Richard}\binits{R.}} \AND
\bauthor{\bsnm{Sun},~\bfnm{Jiayang}\binits{J.}}
(\byear{2004}).
\btitle{Testing homogeneity in a mixture distribution via the {$L\sp 2$} distance between competing models}.
\bjournal{J. Amer. Statist. Assoc.}
\bvolume{99}
\bpages{488--498}.
\bid{doi={10.1198/016214504000000494}, issn={0162-1459}, mr={2062834}}
\end{barticle}
\bptok{imsref}%
\endbibitem

\bibitem{Chen_Chen_Kalbfleisc}
\begin{barticle}[mr]
\bauthor{\bsnm{Chen},~\bfnm{Hanfeng}\binits{H.}},
\bauthor{\bsnm{Chen},~\bfnm{Jiahua}\binits{J.}} \AND
\bauthor{\bsnm{Kalbfleisch},~\bfnm{John~D.}\binits{J.D.}}
(\byear{2001}).
\btitle{A modified likelihood ratio test for homogeneity in finite mixture models}.
\bjournal{J. R. Stat. Soc. Ser. B Stat. Methodol.}
\bvolume{63}
\bpages{19--29}.
\bid{doi={10.1111/1467-9868.00273}, issn={1369-7412}, mr={1811988}}
\end{barticle}
\bptok{imsref}%
\endbibitem

\bibitem{chen_Li}
\begin{barticle}[mr]
\bauthor{\bsnm{Chen},~\bfnm{Jiahua}\binits{J.}} \AND
\bauthor{\bsnm{Li},~\bfnm{Pengfei}\binits{P.}}
(\byear{2009}).
\btitle{Hypothesis test for normal mixture models: The {EM} approach}.
\bjournal{Ann. Statist.}
\bvolume{37}
\bpages{2523--2542}.
\bid{doi={10.1214/08-AOS651}, issn={0090-5364}, mr={2543701}}
\end{barticle}
\bptok{imsref}%
\endbibitem

\bibitem{Chernoff_Lander}
\begin{barticle}[mr]
\bauthor{\bsnm{Chernoff},~\bfnm{Herman}\binits{H.}} \AND
\bauthor{\bsnm{Lander},~\bfnm{Eric}\binits{E.}}
(\byear{1995}).
\btitle{Asymptotic distribution of the likelihood ratio test that a mixture of two binomials is a single binomial}.
\bjournal{J. Statist. Plann. Inference}
\bvolume{43}
\bpages{19--40}.
\bid{doi={10.1016/0378-3758(94)00006-H}, issn={0378-3758}, mr={1314126}}
\end{barticle}
\bptok{imsref}%
\endbibitem

\bibitem{Dacunha_Gassiat}
\begin{barticle}[mr]
\bauthor{\bsnm{Dacunha-Castelle},~\bfnm{D.}\binits{D.}} \AND
\bauthor{\bsnm{Gassiat},~\bfnm{E.}\binits{E.}}
(\byear{1999}).
\btitle{Testing the order of a model using locally conic parametrization: Population mixtures and stationary {ARMA} processes}.
\bjournal{Ann. Statist.}
\bvolume{27}
\bpages{1178--1209}.
\bid{doi={10.1214/aos/1017938921}, issn={0090-5364}, mr={1740115}}
\end{barticle}
\bptok{imsref}%
\endbibitem

\bibitem{Donoho_Jin}
\begin{barticle}[mr]
\bauthor{\bsnm{Donoho},~\bfnm{David}\binits{D.}} \AND
\bauthor{\bsnm{Jin},~\bfnm{Jiashun}\binits{J.}}
(\byear{2004}).
\btitle{Higher criticism for detecting sparse heterogeneous mixtures}.
\bjournal{Ann. Statist.}
\bvolume{32}
\bpages{962--994}.
\bid{doi={10.1214/009053604000000265}, issn={0090-5364}, mr={2065195}}
\end{barticle}
\bptok{imsref}%
\endbibitem

\bibitem{FL_2006}
\begin{barticle}[mr]
\bauthor{\bsnm{Fromont},~\bfnm{Magalie}\binits{M.}} \AND
\bauthor{\bsnm{Laurent},~\bfnm{B{\'e}atrice}\binits{B.}}
(\byear{2006}).
\btitle{Adaptive goodness-of-fit tests in a density model}.
\bjournal{Ann. Statist.}
\bvolume{34}
\bpages{680--720}.
\bid{doi={10.1214/009053606000000119}, issn={0090-5364}, mr={2281881}}
\end{barticle}
\bptok{imsref}%
\endbibitem

\bibitem{Garel}
\begin{barticle}[mr]
\bauthor{\bsnm{Garel},~\bfnm{Bernard}\binits{B.}}
(\byear{2007}).
\btitle{Recent asymptotic results in testing for mixtures}.
\bjournal{Comput. Statist. Data Anal.}
\bvolume{51}
\bpages{5295--5304}.
\bid{doi={10.1016/j.csda.2006.09.033}, issn={0167-9473}, mr={2370872}}
\end{barticle}
\bptok{imsref}%
\endbibitem

\bibitem{Ingster}
\begin{barticle}[author]
\bauthor{\bsnm{Ingster},~\bfnm{Y.}\binits{Y.}}
(\byear{1999}).
\btitle{Minimax detection of a signal for $l^n$-balls}.
\bjournal{Math. Methods Statist.}
\bvolume{7}
\bpages{401--428}.
\end{barticle}
\bptok{imsref}%
\endbibitem

\bibitem{Jager_Wellner}
\begin{barticle}[mr]
\bauthor{\bsnm{Jager},~\bfnm{Leah}\binits{L.}} \AND
\bauthor{\bsnm{Wellner},~\bfnm{Jon~A.}\binits{J.A.}}
(\byear{2007}).
\btitle{Goodness-of-fit tests via phi-divergences}.
\bjournal{Ann. Statist.}
\bvolume{35}
\bpages{2018--2053}.
\bid{doi={10.1214/0009053607000000244}, issn={0090-5364}, mr={2363962}}
\end{barticle}
\bptok{imsref}%
\endbibitem

\bibitem{klar_Meintanis}
\begin{barticle}[mr]
\bauthor{\bsnm{Klar},~\bfnm{Bernhard}\binits{B.}} \AND
\bauthor{\bsnm{Meintanis},~\bfnm{Simos~G.}\binits{S.G.}}
(\byear{2005}).
\btitle{Tests for normal mixtures based on the empirical characteristic function}.
\bjournal{Comput. Statist. Data Anal.}
\bvolume{49}
\bpages{227--242}.
\bid{doi={10.1016/j.csda.2004.05.011}, issn={0167-9473}, mr={2129175}}
\end{barticle}
\bptok{imsref}%
\endbibitem

\bibitem{McLachlan_Peel}
\begin{bbook}[mr]
\bauthor{\bsnm{McLachlan},~\bfnm{Geoffrey}\binits{G.}} \AND
\bauthor{\bsnm{Peel},~\bfnm{David}\binits{D.}}
(\byear{2000}).
\btitle{Finite Mixture Models}.
\bseries{Wiley Series in Probability and Statistics: Applied Probability and Statistics}.
\blocation{New York}:
\bpublisher{Wiley}.
\bid{doi={10.1002/0471721182}, mr={1789474}}
\end{bbook}
\bptok{imsref}%
\endbibitem

\bibitem{Spok}
\begin{barticle}[mr]
\bauthor{\bsnm{Spokoiny},~\bfnm{V.~G.}\binits{V.G.}}
(\byear{1996}).
\btitle{Adaptive hypothesis testing using wavelets}.
\bjournal{Ann. Statist.}
\bvolume{24}
\bpages{2477--2498}.
\bid{doi={10.1214/aos/1032181163}, issn={0090-5364}, mr={1425962}}
\end{barticle}
\bptok{imsref}%
\endbibitem

\bibitem{Wilks}
\begin{bbook}[mr]
\bauthor{\bsnm{Wilks},~\bfnm{Samuel~S.}\binits{S.S.}}
(\byear{1962}).
\btitle{Mathematical Statistics}.
\bseries{A Wiley Publication in Mathematical Statistics}.
\blocation{New York}:
\bpublisher{Wiley}.
\bid{mr={0144404}}
\end{bbook}
\bptok{imsref}%
\endbibitem

\end{thebibliography}
\end{document}